\tikzstyle{edge} = [fill,opacity=.5,fill opacity=.5,line cap=round, line join=round, line width=50pt]
\theoremstyle{plain}
\theoremstyle{definition}
\newtheorem{theorem}{Theorem}[section]
\newtheorem{lemma}[theorem]{Lemma}
\newtheorem{definition}[theorem]{Definition}
\newtheorem{question}[theorem]{Question}
\newtheorem{example}[theorem]{Example}
\newtheorem{proposition}[theorem]{Proposition}
\newtheorem{corollary}[theorem]{Corollary}
\DeclareMathAlphabet{\mathpzc}{OT1}{pzc}{m}{it}
\newcommand{\poi}[1]{\Lambda_{#1}}
\newcommand{\weakpoi}[1]{\poi{#1}^{\text{wk}}}
\newcommand{\weakint}{_{\text{wk}}}
\newcommand{\ul}[1]{\underline{#1}}
\newcommand{\weak}{\le_{\text{wk}}}
\newcommand{\symm}{\mathfrak{S}}
\newcommand{\s}{\sigma}
\begin{document}

\title{Interval structures in the Bruhat and weak orders}%

\date{}

\author{Bridget Eileen Tenner}
\address{Department of Mathematical Sciences, DePaul University, Chicago, IL, USA}
\email{bridget@math.depaul.edu}
\thanks{Research partially supported by Simons Foundation Collaboration Grant for Mathematicians 277603 and by a University Research Council Competitive Research Leave from DePaul University.}

\keywords{Coxeter group, Bruhat order, weak order, interval, lattice, boolean, Catalan, Fibonacci}%

\subjclass[2010]{Primary: 20F55; 
Secondary: 06A07, 
05A15, 
05A05
}

\begin{abstract}%
We study the appearance of notable interval structures---lattices, modular lattices, distributive lattices, and boolean lattices---in both the Bruhat and weak orders of Coxeter groups. We collect and expand upon known results for principal order ideals, including pattern characterizations and enumerations for the symmetric group. This segues naturally into a similar analysis for arbitrary intervals, although the results are less characterizing for the Bruhat order at this generality. In counterpoint, however, we obtain a full characterization for intervals starting at rank one in the symmetric group, for each of the four structure types, in each of the two posets. Each category can be enumerated, with intriguing connections to Fibonacci and Catalan numbers. We conclude with suggestions for further directions and questions, including an interesting analysis of the intervals formed between a permutation and each generator in its support.
\end{abstract}

\maketitle

The Bruhat order of a Coxeter group is a natural and appealing partial ordering on an important mathematical object. Despite that, the structure of its intervals has notable and enigmatic complexity. For example, topological properties are discussed in \cite[\S2.7]{bjorner brenti}, Dyer showed that there are only finitely many isomorphism classes of intervals of a given length in finite Coxeter groups \cite{dyer}, we previously compared generic intervals to principal order ideals in the symmetric group in \cite{tenner intervals factors}, and Bj\"orner and Ekedahl study Betti numbers related to these intervals as well their chain decompositions \cite{bjorner ekedahl}. The possible structures of principal order ideals in these posets are quite a narrow subset of the possible intervals that might appear, and even those do not always have some of the structural properties one might hope for in a poset. The weak order of a Coxeter group is a similarly important and intriguing partial ordering, with important structural results shown by Stembridge \cite{stembridge}.

In this paper, we look at these fascinating architectures and pick out the intervals that are the most well-behaved: lattices, modular lattices, distributive lattices, and boolean lattices. In previous work, we described boolean principal order ideals in the Bruhat order \cite{tenner patt-bru}. Somewhat wonderfully, those ideals can be described in terms of pattern avoidance in the symmetric group.

Here we look, more generally, at when an arbitrary interval might be a well-behaved lattice in these posets. The potential intricacies of Coxeter group elements mean that this can be highly element-specific. We begin by collecting and expanding upon known results for principal order ideals in each of these contexts. Furthermore, we focus in on the symmetric group for pattern characterizations and enumerations. These analyses of principal order ideals lead to analogous questions about the more general setting of arbitrary intervals. Indeed, we can describe the intervals that fit our four criteria in each poset, up to a point. The weak order is particularly amenable, due to a result of Stembridge \cite{stembridge}. The question for the Bruhat order, on the other hand, can be answered to some extent, but does not resolve to a clear characterization.

We devote the remainder of this work to intervals in $\symm_n$, in both posets, in which the minimum element is an atom. Remarkably, we can completely characterize the desired intervals, with each well-behaved lattice structure and in each poset. Moreover, the numbers of such intervals can be computed every time, with elegant results. The required properties for these intervals---which are so close to being principal order ideals---to be well-behaved lattices are reminiscent of the rules for principal order ideals that begin this work. However, as can be seen by comparing Tables~\ref{table:poi lattice enumerations in Sn} and~\ref{table:atom lattice enumerations in Sn}, the possibilities themselves are quite different.

The paper is organized as follows. Section~\ref{sec:preliminaries} lays out the primary objects and notation of this work. In Section~\ref{sec:poi lattices}, we show the variety of principal order ideal structures in the Bruhat and weak orders, and classify when the well-behaved ones appear (summarized in Table~\ref{table:poi lattices}). In the case of the symmetric group, we characterize these phenomena by pattern avoidance and provide enumerations for each case, in each order. This section collects and expands on previous work. Section~\ref{sec:boolean review} will briefly review the boolean-related results of \cite{tenner patt-bru} and other works. Section~\ref{sec:interval hierarchy} shows that, in an important sense, the hierarchical structure of principal order ideals lays the groundwork for arbitrary intervals in each of the Bruhat and weak orders, highlighted in Theorem~\ref{thm:bruhat intervals modular=boolean} and Corollary~\ref{cor:weak interval characterization}. In Section~\ref{sec:intervals over atoms in Sn}, we look at the special case of intervals whose minimum elements are atoms, in both the Bruhat and weak orders. In each of these settings, we explicitly characterize all such intervals that are lattices, modular lattices, distributive lattices, and boolean lattices (including Theorems~\ref{thm:boolean interval at rank 1} and~\ref{thm:lattice interval at rank 1}). We enumerate each variation, with appealing connections to Fibonacci and Catalan numbers (Theorems~\ref{thm:boolean interval at rank 1 count} and~\ref{thm:counting dist/mod lattices above atoms in the weak order}, Propositions~\ref{prop:counting lattices above atoms in the weak order} and~\ref{prop:counting boolean lattices above atoms in the weak order}, and Corollary~\ref{cor:lattice interval at rank 1 count}). We conclude the paper with a sampling of related further directions and questions in Section~\ref{sec:open questions}, including characterization and enumeration of permutations that form boolean intervals over all elements of their support, in both the Bruhat and weak orders (Corollaries~\ref{cor:boolean for all support bruhat} and~\ref{cor:boolean for all support weak}).

\section{Preliminaries}\label{sec:preliminaries}

In preparation for our main work, we use this section to highlight relevant terminology and to set notation. This effort falls into three categories---Coxeter-theoretic, poset-theoretic, and pattern-based. To avoid suggesting disproportionate importance to this material via word count, we use examples to remind the reader of key definitions, and outsource a more thorough background to texts such as \cite{bjorner brenti, kitaev, ec1}.

\subsection{Poset-theoretic terminology and notation}\label{subsec:poset defns}

We will be concerned with posets whose elements are organized in particular ways. Our motivating focus is the most demanding of these organizations (\emph{boolean} posets), but it is illuminating to consider them in a broader context and we will look at \emph{lattices}, \emph{modular lattices}, and \emph{distributive lattices}, as well. We present Figure~\ref{fig:poset examples} as a nudge toward recalling definitions of the latter three of these. In fact, the posets depicted in Figures~\ref{fig:poset examples}(bc) are characterizing features of distributive lattices: a lattice is distributive if and only if it has no sublattice isomorphic to either of those examples \cite[Theorem 4.10]{davey priestley}.

\begin{figure}[htbp]
\begin{tabular}{m{1in} m{1in} m{1.125in} m{.75in}}
(a) & (b) & (c) & (d)\\
\ \ \begin{tikzpicture}
\foreach \x in {(0,0),(0,1),(1,0),(1,1)} {\fill \x circle (2pt);}
\draw (0,0) -- (0,1) -- (1,0) -- (1,1) -- (0,0);
\end{tikzpicture}
&
\ \ \begin{tikzpicture}
\foreach \x in {(0,.5),(.5,1.5),(0,2.5),(-.5,1),(-.5,2)} {\fill \x circle (2pt);}
\draw (0,.5) -- (.5,1.5) -- (0,2.5) -- (-.5,2) -- (-.5,1) -- (0,.5);
\end{tikzpicture}
&
\ \ \begin{tikzpicture}
\foreach \x in {(0,0),(-.75,.75), (0,.75), (0,1.5), (.75,.75)} {\fill \x circle (2pt);}
\draw (0,0) -- (-.75,.75) -- (0,1.5) -- (.75,.75) -- (0,0) -- (0,1.5);
\end{tikzpicture}
&
\ \ \begin{tikzpicture}
\foreach \x in {(0,0),(.75,.75),(0,1.5),(-.75,.75),(0,2.25)} {\fill \x circle (2pt);}
\draw (0,0) -- (.75,.75) -- (0,1.5) -- (0,2.25);
\draw (0,0) -- (-.75,.75) -- (0,1.5);
\end{tikzpicture}
\end{tabular}
\caption[caption]{(a) A poset that is not a lattice.\\\hspace{\textwidth}
\phantom{\textsc{Figure 2. }}(b) A lattice that is not modular.\\\hspace{\textwidth}
\phantom{\textsc{Figure 2. }}(c) A modular lattice that is not distributive.\\\hspace{\textwidth}
\phantom{\textsc{Figure 2. }}(d) A distributive lattice that is not boolean.
}\label{fig:poset examples}
\end{figure}
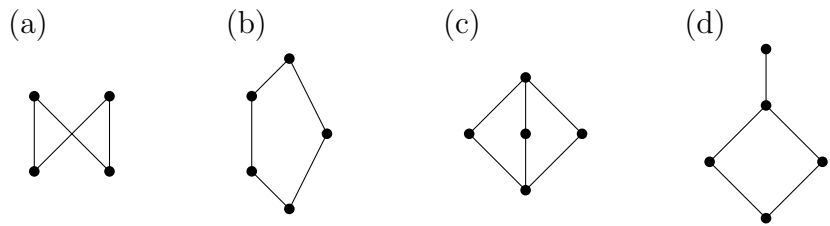

The subclass of distributive lattices that we consider here is a particularly tame family of posets.

\begin{definition}\label{defn:boolean poset}
A poset is \emph{boolean} if it is isomorphic to the poset of subsets of a finite set $S$, ordered by inclusion. We say that such a boolean poset is \emph{on $|S|$ elements}.
\end{definition}

A boolean poset is a distributive lattice, where the join operation is set intersection and the meet operation is union. Because boolean posets are unique up to isomorphism, we may refer to ``the'' boolean poset of a given size.

\begin{example}\label{ex:boolean on 4 elements}
The poset depicted in Figure~\ref{fig:poset examples}(d) is a distributive lattice, but not a boolean one. The boolean poset on four elements appears in Figure~\ref{fig:boolean poset}.
\end{example}

\begin{figure}[htbp]
\begin{tikzpicture}
\coordinate (empty) at (0,0);
\coordinate (a) at (-1.5,1);
\coordinate (b) at (-.5,1);
\coordinate (c) at (.5,1);
\coordinate (d) at (1.5,1);
\coordinate (ab) at (-2.5,2);
\coordinate (ac) at (-1.5,2);
\coordinate (ad) at (-.5,2);
\coordinate (bc) at (.5,2);
\coordinate (bd) at (1.5,2);
\coordinate (cd) at (2.5,2);
\coordinate (abc) at (-1.5,3);
\coordinate (abd) at (-.5,3);
\coordinate (acd) at (.5,3);
\coordinate (bcd) at (1.5,3);
\coordinate (abcd) at (0,4);
\foreach \x in {empty,a,b,c,d,ab,ac,ad,bc,bd,cd,abc,abd,acd,bcd,abcd} {\fill (\x) circle (2pt);}
\foreach \x in {a,b,c,d} {\draw (empty) -- (\x);}
\foreach \x in {ab,ac,ad} {\draw (a) -- (\x);}
\foreach \x in {ab,bc,bd} {\draw (b) -- (\x);}
\foreach \x in {ac,bc,cd} {\draw (c) -- (\x);}
\foreach \x in {ad,bd,cd} {\draw (d) -- (\x);}
\foreach \x in {ab,ac,bc} {\draw (abc) -- (\x);}
\foreach \x in {ab,ad,bd} {\draw (abd) -- (\x);}
\foreach \x in {ac,ad,cd} {\draw (acd) -- (\x);}
\foreach \x in {bc,bd,cd} {\draw (bcd) -- (\x);}
\foreach \x in {abc,abd,acd,bcd} {\draw (abcd) -- (\x);}
\end{tikzpicture}
\caption{The boolean poset on four elements.}\label{fig:boolean poset}
\end{figure}
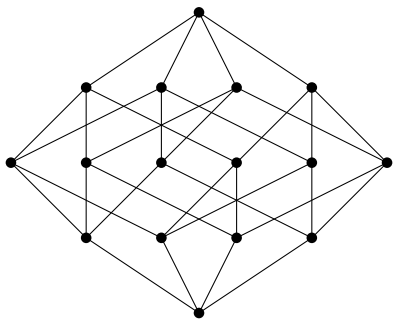

These poset categories obey the following hierarchy:
\begin{equation}\label{eqn:hierarchy}
\Big\{\text{boolean lattices}\Big\} \subset \Big\{\text{distributive lattices}\Big\} \subset \Big\{\text{modular lattices}\Big\} \subset \Big\{\text{lattices}\Big\}.
\end{equation}

\subsection{Coxeter-theoretic terminology and notation}
We now briefly give relevant Coxeter-theoretic definitions and notation, and the reader is referred to \cite{bjorner brenti} for more information.

\begin{definition}\label{defn:coxeter group}
A \emph{Coxeter group} consists of a collection $S$ of generators, all of which are involutions, and relations of the form
$$(st)^{m(s,t)} = 1,$$
where $m(s,t) = m(t,s) \in \mathbb{Z}^+ \cup \{\infty\}$, for all $s,t \in S$. When $m(s,t) = 2$, the relation $st = ts$ is a \emph{commutation}. When $m(s,t) > 2$, the relation $\underbrace{sts\cdots}_{m(s,t)} = \underbrace{tst\cdots}_{m(s,t)}$ is a \emph{braid}.
\end{definition}

As discussed in \cite[Ch.~1 and 8]{bjorner brenti}, the finite Coxeter groups of types $A$, $B$, and $D$ have combinatorial interpretations as permutations, signed permutations, and signed permutations with restriction. The finite Coxeter group of type $A_{n-1}$, denoted $\symm_n$, is the \emph{symmetric group}. The finite Coxeter group of type $B_n$, denoted $\symm^B_n$, is the \emph{hyperoctahedral group}. When discussing signed permutations, we may write $\ul{i} := -i$ for readability.

\begin{definition}
For $i \ge 1$, let $\s_i$ be the map swapping $i$ and $i+1$ (and, when relevant, swapping $-i$ and $-(i+1)$) and fixing all other letters. Let $\s_0$ be the map swapping $1$ and $-1$ and fixing all other letters. Let $\s_{1'}$ be the map swapping $1$ and $-2$, $2$ and $-1$, and fixing all other letters. The symmetric group $\symm_n$ is generated by $\{\s_i : 1 \le i \le n-1\}$. The hyperoctahedral group $\symm^B_n$ is generated by $\{\s_i : 0 \le i \le n-1\}$. The finite Coxeter group of type $D$, $\symm^D_n$, is generated by $\{\s_i : 1 \le i \le n-1\text{ or } i = 1'\}$. These involutions satisfy the relations:
\begin{align*}
(\s_i \s_j)^2 &= 1 \text{ when } i,j \in \{0,1,2,\ldots\} \text{ and } |i-j| > 1,\\
(\s_i \s_j)^3 &= 1 \text{ when } i,j \in \{1,2,3\ldots\} \text{ and } |i-j| = 1,\\
(\s_0 \s_1)^4 &= 1,\\
(\s_{1'} \s_i)^2 &= 1 \text{ when } i \in \{1,2,3, \ldots\} \setminus \{2\} \text{, and}\\
(\s_{1'} \s_2)^3 &= 1.
\end{align*}
\end{definition}

Writing group elements as ``efficient'' products of these generators is important for a variety of mathematical questions and implications.

\begin{definition}\label{defn:reduced decomposition}
Let $w$ be an element in a Coxeter group $G$ with generators $S$. If $w = s_1s_2\ldots s_{\ell(w)}$ with $s_i \in S$ and $\ell(w)$ minimal, then $s_1s_2\ldots s_{\ell(w)}$ is a \emph{reduced decomposition} for $w$ and $\ell(w)$ is the \emph{length} of $w$. The set of reduced decompositions of $w$ is denoted $R(w)$.
\end{definition}

The elements of $R(w)$ are related to each other by commutation and braid moves \cite{matsumoto, tits}, a fact that leads to many interesting questions and properties.

\begin{definition}
A Coxeter group element $w$ is \emph{fully commutative} if any two reduced decompositions for $w$ are related (only) by a sequence of commutations.
\end{definition}

For elements of the finite Coxeter group of type $A$, the reduced decompositions $R(w)$ were enumerated by Stanley in \cite{stanley}. 
The sizes of this set and important partitions of it have also been studied by others, including \cite{bergeron ceballos labbe, elnitsky, fishel milicevic patrias tenner, tenner rdpp, tenner rwm, zollinger}. Reduced decompositions can be used to endow a Coxeter group with a partial ordering, and there are two ``canonical'' ways to do this.

\begin{definition}\label{defn:strong bruhat}
For $v,w \in G$, a Coxeter group, say that $v \le w$ if there exists a reduced decomposition for $v$ that is a subword of a reduced decomposition for $w$. The resulting poset is the \emph{Bruhat order}.
\end{definition}

The \emph{subword property} says that, given a reduced decomposition of $w$, we have $v \le w$ if and only if $v$ is equal to a subword of that decomposition.

There are two versions of the other partially ordering on Coxeter groups that we consider here. Without loss of generality, we restrict our discussion to the ``right'' one.

\begin{definition}\label{defn:weak bruhat}
For $v,w \in G$, a Coxeter group, say that $v \weak w$ if there exists a reduced decomposition for $v$ that is the prefix of a reduced decomposition for $w$. The resulting poset is the \emph{(right) weak order}. For $v \weak w$, we will write $[v,w]\weakint$ for the interval between $v$ and $w$ in the weak order.
\end{definition}

Certainly $\symm_n \subset \symm^D_n \subset \symm^B_n$. A symmetric group element may be called ``unsigned,'' and any symmetric group element is also a hyperoctahedral group element. We view permutations as maps, and compose them from right to left. Thus 
$$\Big(w\s_i\Big)(j) = \begin{cases}
w(j) & \text{ if } j \not\in\{i,i+1\},\\
w(i+1) & \text{ if } j = i, \text{ and}\\
w(i) & \text{ if } j = i+1.
\end{cases}$$

In addition to writing elements of $\symm_n$, $\symm^B_n$, and $\symm^D_n$ as products of generators, we may also write them in \emph{one-line notation}, as in $w = w(1)w(2)\cdots w(n)$. Note that even in the case of signed permutations, this representation completely describes $w$.

\begin{example}\label{ex:permutations and signed permutations}
Let $x = 3214$, $y = 3\ul{2}14$, and $z = 3\ul{2}1\ul{4}$.
$$\{x,y,z\} \cap \symm_4 = \{x\} \hspace{.5in} \{x,y,z\} \cap \symm^D_4 = \{x,z\} \hspace{.5in} \{x,y,z\} \subset \symm^B_4$$
\end{example}

\subsection{Patterns and reduced decompositions}\label{ssec:old pattern results}

As part of our discussions, we will want to use the language of permutation patterns.

\begin{definition}\label{defn:patterns}
Let $p \in \symm^B_k$ and $w \in \symm^B_n$ be (possibly unsigned) permutations. The permutation $w$ \emph{contains} a \emph{$p$-pattern} if there exist indices $1 \le i_1 < \cdots < i_k \le n$ such that
\begin{itemize}
\item the string $|w(i_1)| \cdots |w(i_k)|$ is in the same relative order as $|p(1)| \cdots |p(k)| \in \symm_k$, and
\item $w(i_j) \cdot p(j) > 0$ for all $j$.
\end{itemize}
If $w$ does not contain a $p$-pattern, then $w$ \emph{avoids} $p$.
\end{definition}

\begin{example}\label{ex:patterns}
Let $p = 32\ul{1}$. The permutation $531\ul{2}4$ contains $p$, while the permutation $53124$ does not contain $p$.
\end{example}

As shown in \cite{tenner rdpp} and \cite{tenner rwm}, there is an important relationship between permutation patterns and reduced decompositions. The main results of those papers yield a key tool in some of the work below as it relates to the symmetric group. We present the implications of those earlier results as corollaries here, and refer the reader to earlier work for more details.

\begin{corollary}[{cf.~\cite{tenner rdpp, tenner rwm}}]\label{cor:what it means to avoid 321 and 3412}
For an unsigned permutation $w$, the following are equivalent:
\begin{itemize}
\item $w$ avoids the patterns $321$ and $3412$,
\item some reduced decomposition of $w$ is a product of distinct generators, and
\item every reduced decomposition of $w$ is a product of distinct generators.
\end{itemize}
\end{corollary}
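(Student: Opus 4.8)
The plan is to route the whole statement through the equality $\ell(w)=|\operatorname{supp}(w)|$, where $\operatorname{supp}(w)\subseteq S$ is the set of generators that occur in a reduced decomposition of $w$. First I would recall the standard fact that $\operatorname{supp}(w)$ does not depend on the chosen reduced decomposition (a reduced word for an element of a standard parabolic subgroup uses only that subgroup's generators), so that every reduced decomposition of $w$ uses exactly the $|\operatorname{supp}(w)|$ generators of $\operatorname{supp}(w)$ among its $\ell(w)$ letters; in particular $\ell(w)\ge|\operatorname{supp}(w)|$. A given reduced decomposition is a product of distinct generators exactly when its $\ell(w)$ letters are distinct, i.e.\ exactly when $\ell(w)=|\operatorname{supp}(w)|$; since this condition does not mention the particular word, one reduced decomposition is a product of distinct generators if and only if all of them are. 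This settles the equivalence of the last two bullets and reduces the corollary to the assertion that $w\in\symm_n$ avoids $321$ and $3412$ if and only if $\ell(w)=|\operatorname{supp}(w)|$.

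Next I would pass to the inversion graph $G_w$ on vertex set $\{1,\dots,n\}$, with an edge $\{i,j\}$ for each inversion of $w$, so that $\ell(w)$ is the number of edges of $G_w$. Decomposing $w=w_1\oplus\cdots\oplus w_m$ into indecomposable summands, there are no inversions straddling two summands, so $G_w$ is the disjoint union of the (connected) inversion graphs of the $w_i$; moreover $s_j\in\operatorname{supp}(w)$ exactly when $j$ is not a boundary between summands, so $|\operatorname{supp}(w)|=n-m$. Therefore $\ell(w)=|\operatorname{supp}(w)|$ if and only if every connected component of $G_w$ has one fewer edge than it has vertices, i.e.\ if and only if $G_w$ is a forest (has no cycle).

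It then remains to show that $w$ contains $321$ or $3412$ if and only if $G_w$ has a cycle; since a pattern occurring inside a single summand is a pattern of $w$, it suffices to reason one summand at a time. One direction is a direct translation: a $321$ at positions $i<j<k$ is precisely a triangle of $G_w$, and a $3412$ at positions $i<j<k<l$ is precisely a four-cycle of $G_w$ whose two non-edges are $\{i,j\}$ and $\{k,l\}$. For the converse, take a shortest cycle of $G_w$; it is chordless, and an inversion graph, being a permutation graph, has no chordless cycle of length $\ge 5$, so its shortest cycle has length $3$ or $4$. A triangle is immediately a decreasing triple, hence a $321$; and for a chordless four-cycle a quick check of the three possible choices of ``diagonal'' pair shows that only one of them can be realized by any permutation, and it forces a $3412$ on those four positions. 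Assembling the displayed equivalences yields the corollary, which is in essence a repackaging of the main results of \cite{tenner rdpp, tenner rwm}.

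The step I expect to be the real obstacle is this last one: pinning down the pattern-versus-cycle dictionary, and in particular the claim that an inversion graph contains no long chordless cycle. One can cite this from the theory of permutation graphs, or instead derive it by hand from pattern avoidance---$321$-avoidance makes $G_w$ bipartite, and then $3412$-avoidance kills all four-cycles, after which induced six-cycles and longer must still be excluded directly---together with the small case analysis identifying exactly which four-cycles arise. Everything preceding it is routine bookkeeping with parabolic subgroups and direct-sum decompositions; the author may well prefer simply to quote \cite{tenner rdpp, tenner rwm} and skip it.
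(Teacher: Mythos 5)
Your argument is correct, but it is worth noting that the paper does not actually prove this statement: it is presented as a corollary of the main results of \cite{tenner rdpp, tenner rwm}, with the reader referred to those papers for details (the only in-text remark is that the equivalence of the last two bullets holds in any Coxeter group because braid and commutation moves preserve the multiset-support of a word). Your route is therefore genuinely different from the cited one. The papers \cite{tenner rdpp, tenner rwm} work directly with reduced decompositions, relating pattern containment to the presence of (shifted) reduced words of the pattern inside reduced words of $w$; you instead reduce everything to the single graph-theoretic condition that the inversion graph $G_w$ be a forest, via the chain ``distinct letters $\Leftrightarrow \ell(w)=|\operatorname{supp}(w)| \Leftrightarrow$ every component of $G_w$ is a tree $\Leftrightarrow$ $G_w$ has no induced $C_3$ or $C_4$ $\Leftrightarrow$ $w$ avoids $321$ and $3412$.'' This is self-contained and arguably more illuminating, but it does lean on two facts you should either prove or cite explicitly: (i) the connected components of $G_w$ are simultaneously intervals of positions and of values, so they coincide with the blocks of the finest direct-sum decomposition (this is what makes $|\operatorname{supp}(w)|=n-m$ and the component-by-component edge count legitimate); and (ii) a permutation graph has no chordless cycle of length at least $5$, which you correctly identify as the crux and which can be quoted from the theory of comparability/co-comparability graphs or derived by hand from $321$- and $3412$-avoidance as you indicate. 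Your case analysis of the three possible diagonal pairs of an induced $C_4$ is right: only the configuration with non-edges $\{i,j\}$ and $\{k,l\}$ is realizable, and it forces $w(k)<w(l)<w(i)<w(j)$, i.e.\ a $3412$-pattern. What your approach buys is a proof readable without the machinery of \cite{tenner rdpp, tenner rwm}; what it gives up is the uniform treatment of other types, since the inversion-graph dictionary is specific to $\symm_n$, whereas the paper's remark about the second and third bullets being equivalent applies to all Coxeter groups.
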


The equivalence of the second and third bullet points in Corollary~\ref{cor:what it means to avoid 321 and 3412} holds for all Coxeter groups, as a result of the types of relations that may occur in these groups.

\begin{corollary}[{cf.~\cite{bjs, tenner rdpp, tenner rwm}}]\label{cor:what it means to avoid 321}
For an unsigned permutation $w$, the following are equivalent:
\begin{itemize}
\item $w$ is fully commutative,
\item $w$ avoids the pattern $321$,
\item no reduced decomposition of $w$ contains $\s_i\s_{i+1}\s_i$ as a factor, for any $i$, and
\item no reduced decomposition of $w$ contains $\s_{i+1}\s_i\s_{i+1}$ as a factor, for any $i$.
\end{itemize}
\end{corollary}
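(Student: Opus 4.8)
The plan is to split the four conditions into three equivalences: the first bullet versus the third, the third versus the fourth, and the first versus the second. The first two of these are formal consequences of the structure of Coxeter groups of type~$A$, where (as recorded in the Preliminaries) the only braid relation is $\s_i\s_{i+1}\s_i = \s_{i+1}\s_i\s_{i+1}$. The third equivalence is the theorem of Billey, Jockusch, and Stanley, for which I would either cite \cite{bjs} directly — as the text preceding the statement does — or reprove it via the wiring-diagram model below.

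For the first bullet versus the third: since a braid move in type~$A$ merely replaces a factor $\s_i\s_{i+1}\s_i$ by $\s_{i+1}\s_i\s_{i+1}$ (or vice versa), the Matsumoto--Tits theorem \cite{matsumoto, tits} — the fact, cited above, that all reduced decompositions of $w$ are joined by commutation and braid moves — shows that if no reduced decomposition of $w$ contains a factor $\s_i\s_{i+1}\s_i$, then no braid move is ever available, so $R(w)$ is a single commutation class and $w$ is fully commutative. Conversely, if some reduced decomposition $\mathbf{w}$ does contain such a factor, I would exhibit a commutation invariant separating $\mathbf{w}$ from the decomposition obtained by one braid move: for fixed $i$, the subsequence of letters of a word that equal $\s_i$ or $\s_{i+1}$ is unchanged by every commutation, because a commutation involving $\s_i$ (resp.\ $\s_{i+1}$) slides it past some $\s_j$ with $|i-j|\ge 2$ (resp.\ $|i+1-j|\ge 2$), hence past a letter that is neither $\s_i$ nor $\s_{i+1}$. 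A braid move at the factor in question changes this subsequence from $\cdots\s_i\s_{i+1}\s_i\cdots$ to $\cdots\s_{i+1}\s_i\s_{i+1}\cdots$, so the two reduced words lie in different commutation classes and $w$ is not fully commutative. The equivalence of the third and fourth bullets follows from the same observation: one braid move converts a reduced decomposition with a factor $\s_i\s_{i+1}\s_i$ into one with a factor $\s_{i+1}\s_i\s_{i+1}$, and conversely.

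For the first bullet versus the second, I would draw each reduced decomposition $s_1\cdots s_{\ell(w)}$ as an arrangement of $n$ pseudolines (strands), the letter $\s_i$ recording a crossing, at height $i$, of the two strands then occupying positions $i$ and $i+1$; the decomposition is reduced precisely when no two strands cross more than once, and in that case which pairs cross is determined by $w$ — indeed three strands pairwise cross exactly when $w$ contains a $321$ pattern. If three strands pairwise cross, then among all pairwise-crossing triples one may choose one whose three mutual crossings bound an empty triangular region (should a fourth strand enter the region, it produces a pairwise-crossing triple with a strictly smaller such region, so a minimal choice is empty); performing commutations to bring those three crossings to consecutive positions in the word yields a reduced decomposition with a factor $\s_m\s_{m+1}\s_m$, so $w$ is not fully commutative by the preceding paragraph. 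Conversely, a factor $\s_m\s_{m+1}\s_m$ is visibly such an empty triangle and forces three pairwise-crossing strands, hence a $321$ pattern. The main obstacle is this ``empty triangle'' step — formalizing that a minimal pairwise-crossing triple of strands can be isolated by commutations — which is the combinatorial core of the Billey--Jockusch--Stanley result; the alternative, and the route the surrounding text adopts, is to take that result (or its reformulations in \cite{tenner rdpp, tenner rwm}) as a black box.
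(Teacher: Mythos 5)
Your proposal is correct, but it does considerably more than the paper, which offers no proof of this corollary at all: the surrounding text explicitly presents it as an implication of the cited results \cite{bjs, tenner rdpp, tenner rwm} and refers the reader to those works for details. Your self-contained argument for the equivalence of the first, third, and fourth bullets is sound: the observation that the subsequence of letters from $\{\s_i,\s_{i+1}\}$ is invariant under commutations (since $\s_i$ and $\s_{i+1}$ can never commute past each other) but is changed by a braid move is exactly the right invariant, and combined with the Matsumoto--Tits theorem it cleanly yields both directions. One small ordering point: to conclude that ``no braid move is ever available'' from the third bullet alone you implicitly need the fourth bullet as well, since a braid move also applies at a factor $\s_{i+1}\s_i\s_{i+1}$; so the (trivial) equivalence of the third and fourth bullets should be established first. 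This is cosmetic, not a gap. For the equivalence with $321$-avoidance, your wiring-diagram sketch is the standard proof, and you correctly isolate the ``empty triangle'' step as the only nontrivial content; since your stated fallback is to cite \cite{bjs} for that step --- which is precisely what the paper does for the entire corollary --- your proposal is at least as rigorous as the source. The trade-off is length versus self-containment: the paper's citation-only treatment keeps its preliminaries short, while your version makes the Coxeter-theoretic half of the statement verifiable on the spot.
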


\section{Architecture of principal order ideals}\label{sec:poi lattices}

In this section, we address the structure of principal order ideals in the Bruhat order of Coxeter groups by characterizing those elements whose principal order ideals have desirable poset-theoretic features. We do this on the way to our analysis of more general intervals. 

Some of these results have appeared previously, as cited below. Here, we expand on them in several different directions. We collect and summarize these results in Table~\ref{table:poi lattices} for general Coxeter groups, and Table~\ref{table:poi lattices in Sn} for the symmetric group in terms of pattern avoidance.

\begin{table}[htbp]
{\renewcommand{\arraystretch}{2}
\begin{tabular}{m{.85in}||m{2.5in}||m{2.65in}}
& Bruhat order & Weak order\\
\hline
\hline
Lattice & Products of distinct generators \cite{ragnarsson tenner 1} & All elements of finite Coxeter groups \newline (see, for example, \cite{bjorner brenti})\\
\hline
Modular or Distributive & Products of distinct generators & Fully commutative elements (see \cite{stembridge} for Distributive)\\
\hline
Boolean & Products of distinct generators \cite{ragnarsson tenner 1} & Products of commuting generators
\end{tabular}
}
\vspace{.1in}
\caption{Characterization of Coxeter group elements whose principal order ideals have certain properties in the Bruhat and weak orders.}\label{table:poi lattices}
\end{table}
\begin{table}[htbp]
{\renewcommand{\arraystretch}{2}
\begin{tabular}{m{.85in}||m{2in}||m{2in}}
& Bruhat order on $\symm_n$ & Weak order on $\symm_n$ \\
\hline
\hline
Lattice & $321$- and $3412$-avoiding \cite{tenner patt-bru} & All permutations\\
\hline
Modular or Distributive & $321$- and $3412$-avoiding & $321$-avoiding\\
\hline
Boolean & $321$- and $3412$-avoiding \cite{tenner patt-bru} & $321$-, $231$-, and $312$-avoiding
\end{tabular}
}
\vspace{.1in}
\caption{Pattern characterization of symmetric group elements whose principal order ideals have certain properties in the Bruhat and weak orders.}\label{table:poi lattices in Sn}
\end{table}
\begin{table}[htbp]
{\renewcommand{\arraystretch}{2}
\begin{tabular}{m{.85in}||m{1.5in}||m{1.5in}}
& Bruhat order on $\symm_n$ & Weak order on $\symm_n$ \\
\hline
\hline
Lattice & $F_{2n-1}$ & $n!$\\
\hline
Modular or Distributive & $F_{2n-1}$ & $C_n$ \\
\hline
Boolean & $F_{2n-1}$ & $F_{n+1}$
\end{tabular}
}
\vspace{.1in}
\caption{Number of principal order ideals in $\symm_n$ having certain properties in the Bruhat and weak orders, where $F_i$ and $C_i$ are the $i$th Fibonacci and Catalan numbers, respectively, indexed so that $F_0 = 0$ and $F_1 = 1$.}\label{table:poi lattice enumerations in Sn}
\end{table}

The pattern-avoiding permutations described in Table~\ref{table:poi lattices in Sn} are entries P0006, P0002, and P0026, respectively, of \cite{dppa}. The enumerations given in Table~\ref{table:poi lattice enumerations in Sn} are, respectively, sequences A001519 (refined in A105306), A000108, and A000045 of \cite{oeis}. It is interesting to note that the $321$- and $3412$-avoiding permutations were enumerated independently by Fan \cite{fan} and West \cite{west}, where the former was studying products of distinct generators and the latter was studying pattern avoidance. We note that some of the Bruhat data in Tables~\ref{table:poi lattices in Sn} and~\ref{table:poi lattice enumerations in Sn} was also computed for $\symm_n^B$ and $\symm_n^D$ in \cite{tenner patt-bru}.

For the remainder of this paper, let $\poi{w}$ denote the principal order ideal of an element $w$ in the Bruhat order, and let $\weakpoi{w}$ denote the principal order ideal of $w$ in the weak order.

\subsection{Principal order ideals in the Bruhat order}

Consider the Bruhat order of a Coxeter group. As we will see, there is an intriguing amount of collapse that occurs in the hierarchy described in \eqref{eqn:hierarchy} for this poset.

We proved the first result, about boolean principal order ideals, in \cite{tenner patt-bru} for the finite Coxeter groups of types $A$, $B$, and $D$. With Ragnarsson, we expanded that result to all Coxeter groups in \cite{ragnarsson tenner 1}. As remarked in \cite{tenner patt-bru}, the ideal $\poi{w}$ is boolean in the Bruhat order if and only if it is a lattice (see Brenti's work \cite{brenti}). Despite that equivalence, we include an independent proof of the following result to give it a place in the literature, and to set the stage for future arguments.

\begin{theorem}\label{thm:lattice iff distinct letters in strong}
Let $G$ be a Coxeter group and $w \in G$. Consider $G$ as a poset under the Bruhat order. Then the principal order ideal of $w$ is a lattice if and only if some/every reduced decomposition of $w$ is a product of all distinct generators.
\end{theorem}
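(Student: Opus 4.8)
The plan is to prove both implications directly, obtaining a stronger statement in one of them. First, by the remark following Corollary~\ref{cor:what it means to avoid 321 and 3412}, the ``some'' and ``every'' versions of the hypothesis are equivalent in any Coxeter group, so there is no harm in working with whichever formulation is convenient. I would prove the reverse implication by showing, more strongly, that $\poi{w}$ is \emph{boolean}; and I would prove the forward implication in contrapositive form, by exhibiting inside $\poi{w}$ a two-element antichain of atoms that has no join.

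For the reverse implication, suppose $w=s_1\cdots s_\ell$ is reduced with the $s_i$ pairwise distinct. I would induct on $\ell$, the base case $\ell=0$ being trivial. Put $s:=s_\ell$ and $w':=s_1\cdots s_{\ell-1}$, so that $w=w's$, $\ell(w')=\ell-1$, and $w'$ has a reduced decomposition using distinct generators; in particular $s\notin\operatorname{supp}(w')$, since $\operatorname{supp}(w')=\{s_1,\dots,s_{\ell-1}\}$. By the subword property, $\poi{w}=\poi{w'}\cup\{us:u\in\poi{w'}\}$, and these two pieces are disjoint because $s\notin\operatorname{supp}(u)$ for $u\le w'$ while $\operatorname{supp}(us)=\operatorname{supp}(u)\cup\{s\}$. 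I would then verify, using the subword property, the monotonicity of support along Bruhat relations, the fact that $\ell(us)=\ell(u)+1$ whenever $s\notin\operatorname{supp}(u)$, and the lifting property of the Bruhat order (all standard; see~\cite{bjorner brenti}), that: (i) $\poi{w'}$ is a down-set of $\poi{w}$; (ii) for $u,v\le w'$, $u\le vs$ iff $u\le v$; (iii) for $u,v\le w'$, $us\le vs$ iff $u\le v$; and (iv) $us\not\le v$ for all $u,v\le w'$. Items (i)--(iv) say precisely that $\poi{w}\cong\poi{w'}\times\mathbf{2}$, where $\mathbf{2}$ is the two-element chain. The inductive hypothesis makes $\poi{w'}$ boolean, so $\poi{w}$ is boolean and, a fortiori, a lattice.

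For the forward implication I would argue the contrapositive: assume no reduced decomposition of $w$ is a product of distinct generators, and produce a failure of the lattice property. Fix a reduced decomposition $a_1\cdots a_m$ of $w$ and indices $i<j$ with $a_i=a_j=:s$ and $a_k\ne s$ for all $i<k<j$. If every $a_k$ with $i<k<j$ commuted with $s$, then $a_i\cdots a_j$ would simplify to $a_{i+1}\cdots a_{j-1}$ by sliding the leading $s$ rightward and cancelling, contradicting reducedness; hence there is an index $k$ with $i<k<j$ and $m(s,a_k)\ge 3$. Writing $t:=a_k$, the subword $a_ia_ka_j$ of $a_1\cdots a_m$ equals $sts$, which is a reduced decomposition of an element of the dihedral group $\langle s,t\rangle$; by the subword property $sts\le w$, and hence $st\le w$ and $ts\le w$ as well. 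Now $s$ and $t$ are atoms of $\poi{w}$, and any common upper bound of $\{s,t\}$ has length at least $2$; one of length exactly $2$ has support $\{s,t\}$, so it lies in $\langle s,t\rangle$ and equals $st$ or $ts$. Since $m(s,t)\ge 3$ we have $st\ne ts$, so a hypothetical join of $s$ and $t$ in $\poi{w}$ would be simultaneously $\le st$ and $\le ts$ while having length $\ge 2$, forcing it to equal both $st$ and $ts$---impossible. Thus $\poi{w}$ is not a lattice.

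I expect the main obstacle to be items (i)--(iv) in the reverse implication---that is, showing that $\poi{w}$ genuinely splits as the product $\poi{w'}\times\mathbf{2}$. The subtlety is that $s$ need not commute with $\operatorname{supp}(w')$, so this is an assertion about the lower interval $\poi{w}$ itself rather than about a direct product of parabolic subgroups, and ruling out ``diagonal'' Bruhat relations between the two copies of $\poi{w'}$ is exactly where the lifting property enters. The forward implication, by contrast, should be short once one observes that a repeated letter in a reduced word always forces a braid-type subword $sts$ with $m(s,t)\ge 3$.
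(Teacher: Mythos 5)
Your proof is correct. The forward (non-lattice) direction is essentially the paper's argument: both locate a repeated generator $s$ in a reduced word, extract a generator $t$ between the two occurrences with $m(s,t)\ge 3$, and observe that $s,t,st,ts$ sit inside $\poi{w}$ in the configuration of Figure~\ref{fig:poset examples}(a); your explanation of why a join of $s$ and $t$ would be forced to equal both $st$ and $ts$ is just a more explicit version of what the paper asserts. The genuine difference is in the converse direction: the paper simply cites \cite{tenner patt-bru} for the fact that a product of distinct generators has a boolean (hence lattice) principal order ideal, whereas you re-derive that fact from scratch by induction on length via the decomposition $\poi{w}\cong\poi{w'}\times\mathbf{2}$ for $w=w's$ with $s\notin\operatorname{supp}(w')$. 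That decomposition is the crux, and your items (i)--(iv) are exactly the right things to verify; they do all follow from the subword property, monotonicity of support along Bruhat order, and the lifting property, the key point in (ii) and (iii) being that $s\notin\operatorname{supp}(u)$ forces $us>u$, so the appropriate case of the lifting property applies. What your route buys is a self-contained proof independent of \cite{tenner patt-bru}; what the paper's route buys is brevity, at the cost of outsourcing the boolean half to prior work.
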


\begin{proof}
If some (equivalently, every) reduced decomposition of $w$ is a product of distinct generators, then it follows from \cite{tenner patt-bru} that $\poi{w}$ is a boolean poset, which itself is a lattice.

Suppose, instead, that some (equivalently, every) reduced decomposition of $w$ has a repeated letter. That is, there is a reduced decomposition $\cdots s \cdots s \cdots \in R(w)$ with $s \in S$, the generating set of $G$. In fact, because $s$ is an involution, there must be a letter $t$ appearing between the two copies of $s$ in this product, such that $t$ and $s$ do not commute:
$$\cdots s \cdots t \cdots s \cdots \in R(w).$$
Therefore, in the Bruhat order, the elements
$$s, t, st, ts \in \poi{w}.$$
are distinct. Moreover, the four elements $\{s,t,st,ts\}$ appear in the principal order ideal $\poi{w}$ with the structure shown in Figure~\ref{fig:poset examples}(a). Therefore neither the join $s \vee t$ nor the meet $st \wedge ts$ is well defined, and so $\poi{w}$ is not a lattice.
\end{proof}

By the hierarchy described in \eqref{eqn:hierarchy}, this gives the entire characterization that we seek, stated in the second column of Table~\ref{table:poi lattices}. Its translation to the language of pattern avoidance in the symmetric group, described in Table~\ref{table:poi lattices in Sn}, follows from \cite{tenner rdpp, tenner rwm}.

\subsection{Principal order ideals in the weak order}\label{sec:poi weak}

This section is similar to the last, except that we consider the weak order on a Coxeter group. In this setting, we see less collapse of the hierarchy than we saw for the Bruhat order. Consequently, we break the results into two propositions.

\begin{proposition}\label{prop:weak modular}
Let $G$ be a Coxeter group and $w \in G$. Consider $G$ as a poset under the weak order. The principal order ideal of $w$ is a modular lattice if and only if $w$ is fully commutative.
\end{proposition}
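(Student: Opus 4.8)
The plan is to prove both directions via the structure of $\weakpoi{w}$ together with the characterization of fully commutative elements from Corollary~\ref{cor:what it means to avoid 321}. Recall that the weak order principal order ideal $\weakpoi{w}$ is, as a poset, canonically identified with the set of prefixes of reduced decompositions of $w$; equivalently, its maximal chains are exactly the reduced decompositions in $R(w)$, and two reduced decompositions are connected by commutation and braid moves.

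For the forward direction, I would prove the contrapositive: if $w$ is \emph{not} fully commutative, then $\weakpoi{w}$ is not modular. By Corollary~\ref{cor:what it means to avoid 321}, some reduced decomposition of $w$ contains a factor $s_i s_{i+1} s_i$ (equivalently $s_{i+1} s_i s_{i+1}$). Write this decomposition as $u \cdot (s_i s_{i+1} s_i) \cdot v$ where $u, v$ are (possibly empty) words. Let $x$ be the element with reduced decomposition $u$. Then inside $\weakpoi{w}$, above $x$, sit the two distinct elements $x s_i$ and $x s_{i+1}$ (covers of $x$), and above both of them sits the element $y := x s_i s_{i+1} s_i = x s_{i+1} s_i s_{i+1}$, which has length $\ell(x)+3$. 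I would check that the interval $[x,y]\weakint$ inside $\weakpoi{w}$ is precisely the weak order ideal of $s_i s_{i+1} s_i$ (the rank-three interval whose bottom is $x$), which is the pentagon-shaped lattice $N_5$ depicted in Figure~\ref{fig:poset examples}(b) — it has one maximal chain through $x s_i$ of length $3$ and one maximal chain through $x s_{i+1}$ of length $3$, but the join $x s_i \vee x s_{i+1}$ in that interval is $y$, forced because $\ell(w)$-length considerations show there is no element of length $\ell(x)+2$ above both. Since a lattice containing $N_5$ as a sublattice (here even as an interval) is not modular by the standard characterization, and a sublattice of a lattice that fails modularity witnesses the failure in the ambient lattice too, $\weakpoi{w}$ is not modular. (If $\weakpoi{w}$ is not even a lattice, there is nothing to prove; but in finite Coxeter groups it always is — the relevant fact is only needed to phrase the claim.)

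For the reverse direction, suppose $w$ is fully commutative. Then all reduced decompositions of $w$ are related by commutations only, so $\weakpoi{w}$ is the poset of order ideals of the heap of $w$ — a distributive lattice by the fundamental theorem of finite distributive lattices (Birkhoff). This is exactly Stembridge's observation referenced in Table~\ref{table:poi lattices}: for fully commutative $w$, the weak order below $w$ is the distributive lattice $J(P)$ where $P$ is the heap poset. Since every distributive lattice is modular by the hierarchy \eqref{eqn:hierarchy}, $\weakpoi{w}$ is a modular lattice. I would cite \cite{stembridge} for the heap/distributive-lattice identification rather than reprove it.

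The main obstacle is the forward direction, specifically verifying carefully that the presence of a braid factor $s_i s_{i+1} s_i$ in \emph{some} reduced word forces a genuine $N_5$ (or at least a modularity-violating configuration) \emph{inside} $\weakpoi{w}$, rather than merely locally. The subtle point is that $x s_i$ and $x s_{i+1}$ must both be $\weak w$ and their join in $\weakpoi{w}$ must be shown to have length $\ell(x)+3$ — one must rule out that some shorter element above both $x s_i$ and $x s_{i+1}$ exists in $\weakpoi{w}$, which would require an element covering both, i.e.\ of the form $x s_i s_{i+1} = x s_{i+1} s_i$, contradicting that $s_i, s_{i+1}$ do not commute. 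That non-commutation is exactly what the braid relation $m(s_i,s_{i+1})=3 > 2$ gives, so the argument closes; but laying it out requires being precise about which elements are comparable in the weak order, using the prefix characterization of Definition~\ref{defn:weak bruhat}.
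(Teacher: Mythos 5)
Your overall strategy matches the paper's: cite Stembridge for the fully commutative direction and exhibit a modularity-violating configuration built from a braid for the converse. However, there are two problems with your forward direction. First, you invoke Corollary~\ref{cor:what it means to avoid 321} to produce a factor $\s_i\s_{i+1}\s_i$, but that corollary is stated only for unsigned permutations, while the proposition concerns an arbitrary Coxeter group. There, a failure of full commutativity yields reduced decompositions $x(stst\cdots)y$ and $x(tsts\cdots)y$ whose braid factors have $m(s,t)\ge 3$ letters, possibly more than three, so your argument, hard-wired to a length-three braid, only proves the type $A$ case. The paper works directly with a general braid factor of $m(s,t)$ letters, and its five elements $\{x,\,xs,\,xt,\,xst,\,x(stst\cdots)\}$ do the job for every value of $m(s,t)\ge 3$.

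Second, your identification of the interval $[x,y]\weakint$ with the pentagon of Figure~\ref{fig:poset examples}(b) is incorrect, and your own description betrays this: a poset with \emph{two} maximal chains of length three meeting only at top and bottom has six elements (it is $\weakpoi{\s_i\s_{i+1}\s_i}$, a hexagon), whereas the pentagon has five elements and maximal chains of lengths three and \emph{two}. The pentagon is a proper sublattice of that hexagon, obtained by discarding one of the two rank-two elements---exactly the paper's choice of $\{x, xs, xt, xst, x(stst\cdots)\}$, which omits $xts$. Your conclusion can be rescued, since the hexagonal interval is itself non-modular and a non-modular interval forces non-modularity of the ambient lattice; but as written the step ``the interval is precisely $N_5$'' is false. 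You should either exhibit the five-element sublattice explicitly or, as the paper does, verify the failure of the modular identity on the concrete triple $xs \le xst$ and $xt$, computing $xs \vee (xt \wedge xst) = xs$ against $(xs \vee xt) \wedge xst = xst$.
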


\begin{proof}
If $w$ is fully commutative, then $\weakpoi{w}$ is a distributive lattice \cite{stembridge}, so $\weakpoi{w}$ must be modular as well. Now suppose that $\weakpoi{w}$ is not fully commutative; that is, there are a reduced decompositions $x(stst\cdots)y,x(tsts\cdots)y \in R(w)$, where the factors ``$(stst\cdots)$'' and ``$(tsts\cdots)$'' each contain $m(s,t)$ letters, and $x$ and $y$ are (possibly empty) products. Then the five group elements
$$\{x,xs,xt,xst, x(stst\cdots) = x(tsts\cdots)\}$$
form a sublattice of $\weakpoi{w}$ that is isomorphic to the poset depicted in Figure~\ref{fig:poset examples}(b), and thus show that $\weakpoi{w}$ is not modular. Indeed:
$$xs \vee (xt \wedge xst) = xs \vee x = xs,$$
while
$$(xs \vee xt) \wedge xst = x(stst\cdots) \wedge xst = xst.$$
\end{proof}

\begin{proposition}\label{prop:weak boolean}
Let $G$ be a Coxeter group and $w \in G$. Consider $G$ as a poset under the weak order. The principal order ideal of $w$ is boolean if and only if $w$ is a product of commuting generators.
\end{proposition}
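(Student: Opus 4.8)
The plan is to prove Proposition~\ref{prop:weak boolean} by a two-directional argument, using Proposition~\ref{prop:weak modular} (which we may assume) together with the structure of weak-order intervals generated by reduced decompositions. First I would handle the easy direction: if $w$ is a product of commuting generators, say $w = s_{j_1}\cdots s_{j_k}$ with all $s_{j_i}$ pairwise commuting and distinct, then every reduced decomposition of $w$ is just a rearrangement of these $k$ letters, and for any subset $T \subseteq \{s_{j_1},\ldots,s_{j_k}\}$ the product of the elements of $T$ (in any order) is a well-defined element below $w$ in the weak order; conversely every element of $\weakpoi{w}$ arises this way. One checks that $v \weak v'$ in $\weakpoi{w}$ if and only if the corresponding subsets satisfy $T \subseteq T'$, so $\weakpoi{w}$ is isomorphic to the boolean lattice on $k$ elements.

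For the converse I would argue contrapositively: suppose $w$ is \emph{not} a product of commuting generators, and show $\weakpoi{w}$ is not boolean. There are two cases. If $w$ is not fully commutative, then by Proposition~\ref{prop:weak modular} the ideal $\weakpoi{w}$ is not even modular, hence certainly not boolean (by the hierarchy~\eqref{eqn:hierarchy}), and we are done. So assume $w$ is fully commutative but still not a product of commuting generators. Then every reduced decomposition of $w$ is obtainable from every other by commutations only, yet some (equivalently every) reduced decomposition must contain a repeated letter \emph{or} contain two non-commuting letters $s, t$ with no repeat. Since a boolean lattice of rank $\ell$ has exactly $\ell$ atoms and exactly $\binom{\ell}{2}$ elements of rank $2$, the strategy is to produce a rank-two element of $\weakpoi{w}$ that destroys this count — concretely, to exhibit a reduced decomposition beginning $st\cdots$ where $m(s,t) > 2$, so that $xs = s$ and $st$ sit in $\weakpoi{w}$ but $ts \not\weak w$ would force the interval $[e,st]$ of rank $2$ to have only one atom below it among $\{s,t\}$, contradicting the boolean structure; alternatively, if a generator repeats, one finds a reduced word $\cdots u\cdots u\cdots$ and the element of length $2$ produced cannot have two distinct generators below it. The cleanest packaging is: in a boolean $\weakpoi{w}$, distinct generators $s,t$ appearing in reduced words of $w$ with $s,t$ both $\weak w$ would need $st$ and $ts$ both $\weak w$ and distinct, which by the subword/prefix property forces a reduced word of $w$ starting with each, and full commutativity plus $m(s,t) > 2$ then yields a braid relation among reduced words of $w$ — contradicting full commutativity — while if $m(s,t) = 2$ for every pair of generators occurring, and no generator repeats, then $w$ is exactly a product of commuting generators.

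I would present the converse more carefully as follows. Let $s_{i_1}s_{i_2}\cdots s_{i_\ell} \in R(w)$. If some generator appears twice, then (exactly as in the proof of Theorem~\ref{thm:lattice iff distinct letters in strong}) there are non-commuting $s,t$ with $s,t,st \weak w$; but then $ts \weak w$ as well is \emph{not} forced in the weak order, and in fact one shows the rank-$2$ level of $\weakpoi{w}$ fails to have the right number of elements covering exactly two atoms, so $\weakpoi{w}$ is not boolean. If every generator appears exactly once but two of them, $s$ and $t$, fail to commute, then some reduced decomposition of $w$ has the form $u\, s\, v\, t\, w'$ with $s \notin v$; pushing $s$ to the right using commutations (possible since $s$ occurs once and $w$ is — in the remaining subcase — fully commutative) eventually places $s$ adjacent to $t$, giving a reduced word $\cdots st \cdots$ or $\cdots ts\cdots$, and since $m(s,t)>2$ the subword property yields $st \weak w$ but prevents $\{s,t,st,ts\}$ from forming the diamond needed at the bottom of a boolean lattice. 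The main obstacle I anticipate is the bookkeeping in this last subcase: making rigorous the claim that in a fully commutative but non-commuting-product element one can always slide a once-occurring generator next to a non-commuting partner, and then correctly identifying which rank-$2$ count or covering relation is violated. A clean way around it may be to invoke Proposition~\ref{prop:weak boolean}'s companion fact directly — namely that a boolean $\weakpoi{w}$ has a unique maximal chain-free structure forcing $|R(w)| = \ell(w)!$ with all reduced words mere permutations of $\ell(w)$ \emph{distinct, pairwise-commuting} letters — and then simply note that $|R(w)| = \ell(w)!$ together with full commutativity already pins down $w$ as a product of commuting generators. I would likely choose whichever of these two routes produces the shorter writeup once the details are in hand.
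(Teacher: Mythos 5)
Your forward direction is fine and matches the paper's, and your first reduction---using Proposition~\ref{prop:weak modular} (or Stembridge) to assume $w$ is fully commutative---is also how the paper begins. The gap is in the remaining case, where you never settle on a working argument. Your ``concrete'' strategy is to exhibit a reduced decomposition of $w$ beginning $st\cdots$ with $m(s,t)>2$; no such decomposition need exist. For $w=\s_1\s_3\s_2$ we have $R(w)=\{\s_1\s_3\s_2,\ \s_3\s_1\s_2\}$: $w$ is fully commutative, all letters are distinct, non-commuting pairs occur, yet every reduced word begins with the commuting pair $\{\s_1,\s_3\}$. Your ``cleanest packaging'' then asserts that booleanness ``would need $st$ and $ts$ both $\weak w$ and distinct,'' which is not what booleanness requires: it requires that the two atoms $s$ and $t$ have a \emph{common} rank-two upper bound in $\weakpoi{w}$, and in the weak order a length-two element lying above both $s$ and $t$ forces $st=ts$. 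Finally, the $|R(w)|=\ell(w)!$ route is stated circularly (you fold the desired conclusion into the ``companion fact'' you propose to invoke) and is explicitly left with ``details in hand'' pending, so as written the converse is not proved.

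The missing idea is the one the paper uses, and it is short: since $w$ is fully commutative, all of its reduced words are related by commutations, and commutations never change the relative order of occurrences of two non-commuting letters. Hence if some reduced word has the form $\cdots s\cdots t\cdots$ with $st\neq ts$, then $t$ is never the leftmost letter of a reduced word, so $t\not\weak w$ even though $t$ lies in the support of $w$. A boolean ideal of rank $\ell(w)$ must have $\ell(w)$ atoms, while the atoms of $\weakpoi{w}$ are exactly the support generators that can appear leftmost, and the support has at most $\ell(w)$ elements; losing $t$ as an atom therefore rules out booleanness. (In the example above, $\s_2$ is in the support but $\s_2\not\weak w$.) Your rank-two counting instinct can be made to work---in that example there is only one rank-two element rather than $\binom{3}{2}$---but you would still need the same order-preservation fact about commutations to carry it out in general, so you may as well apply it directly to the atoms.
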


\begin{proof}
Boolean lattices are distributive posets, so to assume that $\weakpoi{w}$ is boolean means we can assume that $w$ is fully commutative \cite{stembridge}. Suppose that $\cdots s \cdots t \cdots \in R(w)$, with $st \neq ts$. Then because $w$ is fully commutative, it has no reduced decomposition with leftmost letter $t$, and so $t \not\weak w$. Therefore $\weakpoi{w}$ is not isomorphic to the (boolean) poset of subsets of its generators ordered by inclusion, meaning that $\weakpoi{w}$ is not boolean. This is a contradiction, and thus $w$ must be a product of commuting generators.

Conversely, if $w$ is a product of commuting generators, then for any subset $X$ of its generators, the permutation $w$ has a reduced decomposition in which the letters of $X$ form a prefix of that product. Therefore $\weakpoi{w}$ is isomorphic to the (boolean) poset of subsets of the generators of $w$, ordered by inclusion, where the subset $X$ corresponds to the element $\prod_{x \in X} x \weak w$.
\end{proof}

The pattern characterizations of these results for the symmetric group, stated in Table~\ref{table:poi lattices in Sn}, follow from \cite{tenner rdpp, tenner rwm}. The permutations described by Proposition~\ref{prop:weak boolean}, which avoid $231$, $312$, and $321$, were called ``free'' permutations in \cite{petersen tenner}.

\section{The key to boolean structures in the Bruhat order}\label{sec:boolean review}

As referenced earlier, substantial attention has been paid to boolean principal order ideals in the Bruhat order. In \cite{tenner patt-bru}, we first characterized these structures in the finite Coxeter groups of types $A$, $B$, and $D$, foreshadowing the more general statement in \cite{ragnarsson tenner 1}. Recent work of Gao and H\"anni uses Billey-Postnikov patterns to present a simpler characterization in types $B$ and $D$ \cite{gao hanni}.

As shown in Table~\ref{table:poi lattices in Sn}, boolean elements in type $A$ can be characterized by pattern avoidance. In fact, the same can be said for types $B$ and $D$, although the collection of patterns to be avoided grows. In type $A$, these patterns are $321$ and $3412$ (see Corollary~\ref{cor:what it means to avoid 321 and 3412}). The lists for types $B$ and $D$ are ten and twenty patterns long, respectively.

These so-called ``boolean'' elements in Coxeter groups have been studied in great detail, and from a variety of perspectives, in \cite{claesson kitaev ragnarsson tenner, ragnarsson tenner 1, ragnarsson tenner 2, tenner patt-bru}. Moreover, in addition to the resutling boolean properties of permutations, the patterns $321$ and $3412$ have also proven relevant in other contexts, including those mentioned in \cite{petersen tenner, tenner repetition}. Hultman and Vorwerk looked at boolean elements in the Bruhat order on involutions, which again could be characterized by pattern avoidance \cite{hultman vorwerk}.

We use this section to highlight how those results create the environment necessary for an order ideal---or, more generally, an interval---to be boolean. Suppose that $[v,w]$ is a boolean interval in the Bruhat order. This interval must be isomorphic to the boolean poset on $\ell(w)-\ell(v)$ elements. The subword property of the Bruhat order means that an interval would fail to be boolean if and only if some sort of ``collapse'' would occur when letters of a reduced decomposition of $w$ are deleted. In other words, when deleting letters to find elements between $w$ and $v$, we must be wary of running Coxeter relations, either among the letters getting deleted, or between those letters and the substring whose product yields $v$. For example, suppose that $m(s,t) = 4$. Then the product
$$stst$$
is reduced, whereas the product
$$(sts)(ts) = (tsts)s = tst(ss) = tst$$
collapses.

One might worry that there are infinitely many cases to consider. However, for any $m(s,t)$, such a collapse will require (at least) that a generator (``$s$'' in the above example) gets squared---and all generators are involutions so squaring causes a length collapse. Thus the key to understanding boolean structures in the Bruhat order is detecting when a generator can land next to a copy of itself (and how to prevent this).

\section{Hierarchical architecture of intervals}\label{sec:interval hierarchy}

We now show that the hierarchies of principal order ideals that we found in Section~\ref{sec:poi lattices} is almost a template for more general intervals in both the Bruhat and weak orders. 

\subsection{Arbitrary intervals in the Bruhat order}

Let $G$ be a Coxeter group, viewed as a poset under the Bruhat order. Recall from Section~
\ref{sec:poi lattices} that a principal order ideal in $G$ is a lattice if and only if it is boolean. Although this does not hold true for arbitrary intervals, a nearly identical statement does.

We begin with an example to show that not all lattice intervals in the Bruhat order are modular.

\begin{example}\label{ex:non-modular lattice}
The interval $[\s_k, \s_k\s_{k-1}\s_{k+1}\s_k] \subset \symm_n$ for $n \ge 4$ is a lattice, but it is not modular. Indeed, as illustrated in Figure~\ref{fig:non-modular lattice}, this interval contains a copy of Figure~\ref{fig:poset examples}(b).
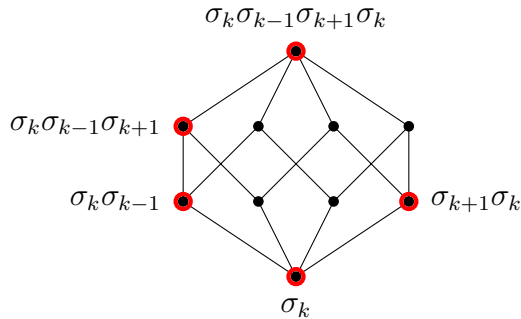
\begin{figure}[htbp]
\begin{tikzpicture}[scale=1]
\foreach \x in {(0,0),(-1.5,1),(-.5,1),(.5,1),(1.5,1),(-1.5,2),(-.5,2),(.5,2),(1.5,2),(0,3)} {\fill \x circle (2pt);}
\foreach \x in {(-1.5,1),(-.5,1),(.5,1),(1.5,1)} {\draw (0,0) -- \x;}
\foreach \x in {(-1.5,2),(-.5,2),(.5,2),(1.5,2)} {\draw (0,3) -- \x;}
\draw (-1.5,1) -- (-1.5,2) -- (-.5,1) -- (.5,2) -- (1.5,1) -- (1.5,2) -- (.5,1) -- (-.5,2) -- (-1.5,1);
\draw (0,-.15) node[below] {$\s_k$};
\draw (0,3.15) node[above] {$\s_k\s_{k-1}\s_{k+1}\s_k$};
\draw (-1.65,1) node[left] {$\s_k\s_{k-1}$};
\draw (-1.65,2) node[left] {$\s_k\s_{k-1}\s_{k+1}$};
\draw (1.65,1) node[right] {$\s_{k+1}\s_k\phantom{\s_{k-1}}$};
\foreach \x in {(0,0), (-1.5,1), (1.5,1), (-1.5,2), (0,3)} {\draw[ultra thick,red] \x circle (2.85pt);}
\end{tikzpicture}
\caption{The five elements marked in red form a copy of Figure~\ref{fig:poset examples}(b), demonstrating that this lattice is not modular.}\label{fig:non-modular lattice}
\end{figure}
\end{example}

The more interesting result, perhaps, is that if an interval in the Bruhat order is a modular lattice, then it is, in fact, a boolean lattice.

\begin{theorem}\label{thm:bruhat intervals modular=boolean}
Let $G$ be a Coxeter group, viewed as a poset under the Bruhat order. An interval $[v,w]$ in $G$ is boolean if and only if it is a modular lattice.
\end{theorem}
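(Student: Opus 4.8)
The plan is to prove both implications, the forward one being immediate from the hierarchy \eqref{eqn:hierarchy}. The real content is the reverse: if $[v,w]$ is a modular lattice in the Bruhat order, then it is boolean. Since boolean intervals are exactly the intervals isomorphic to the subset lattice on $\ell(w)-\ell(v)$ elements, and since Theorem~\ref{thm:lattice iff distinct letters in strong} already handles the case $v = e$, the task is to understand what obstructions to ``boolean-ness'' survive after passing to a nontrivial starting point $v$. Following the philosophy of Section~\ref{sec:boolean review}, the only way an interval fails to be boolean is a length collapse when deleting letters of a reduced word for $w$ down toward $v$: some generator lands next to a copy of itself. I would fix a reduced decomposition of $w$ compatible with $v$ (i.e.\ having a reduced word for $v$ as a subword via the subword property), and analyze the ``extra'' letters — the ones deleted to get from $w$ to $v$.

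The first step is to reduce to a local configuration. Suppose $[v,w]$ is not boolean. Then deleting letters produces a repeated generator, so as in the proof of Theorem~\ref{thm:lattice iff distinct letters in strong} there is a reduced word for $w$ of the form $\cdots s \cdots t \cdots s \cdots$ where the two $s$'s are ``visible'' as elements of $[v,w]$ (i.e.\ not both forced to be deleted when descending to $v$, nor both present in $v$ in a way that forbids their individual deletion). I would argue that in the quotient interval one can extract a sub-poset on the cosets of $\{s, t, st, ts\}$ relative to $v$ — that is, $v$ together with the elements $x_1, x_2, x_3, x_4 \in [v,w]$ obtained by inserting into a reduced word for $v$ the letter $s$, the letter $t$, the pair $st$, and the pair $ts$ respectively. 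The non-commutation $st \neq ts$ should force these to be four distinct elements covering $v$-level appropriately, yielding a copy of Figure~\ref{fig:poset examples}(a) inside $[v,w]$ — but that only shows non-lattice, which is weaker than what we want since we are assuming modularity (hence lattice-ness) and deriving boolean-ness.

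So the argument must be run the other way: assuming $[v,w]$ \emph{is} a lattice (which modularity grants), show that if it is not boolean we can locate a copy of Figure~\ref{fig:poset examples}(b), the non-modular pentagon, contradicting modularity. Here is where I expect the main obstacle. If $[v,w]$ is a lattice but not boolean, then (by the subword collapse analysis, cf.\ Section~\ref{sec:boolean review}) there is a braid-type collapse: generators $s,t$ with $m(s,t) = m \geq 3$ and a reduced word for $w$ containing the alternating factor of length $m$, positioned so that its deletion interacts with $v$. The pentagon should come from the five elements $\{v', v's, v't, v'st, v'(sts\cdots)\}$ where $v'$ is the prefix of (a reduced word for) $v$ just before this factor and $(sts\cdots)$ has $m$ letters — mirroring exactly the computation in the proof of Proposition~\ref{prop:weak modular}, but now I must check these five elements genuinely sit inside the Bruhat interval $[v,w]$ and form a sublattice there, which requires verifying that the relevant joins and meets computed in $G$ actually land in $[v,w]$. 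The delicate point is that $v$ need not itself be fully commutative, so the braid factor in the word for $w$ might be partly ``used up'' by $v$; I would handle this by choosing the reduced word for $w$ and the embedded reduced word for $v$ to make the offending alternating run maximal and disjoint from $v$'s letters as far as possible, reducing to the clean case where the five-element pentagon argument of Proposition~\ref{prop:weak modular} transplants verbatim. Once the pentagon is exhibited, modularity is contradicted, so $[v,w]$ must be boolean, completing the proof.
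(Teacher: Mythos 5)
Your forward direction is fine, but the reverse direction is a plan rather than a proof, and the plan has two genuine gaps. First, the claim that a non-boolean interval $[v,w]$ must exhibit a ``visible'' braid collapse in some reduced word of $w$ compatible with $v$ is exactly the kind of statement that Section~\ref{sec:intervals over atoms in Sn} warns is delicate: a reduced word for $v$ may embed as a subword in many ways, may be interrupted by the extra letters, and may ``use up'' part of the offending alternating run. You acknowledge this but propose to handle it by choosing the embedding so that the run is ``maximal and disjoint from $v$'s letters as far as possible''; that reduction is not carried out and is not obviously achievable. Second, even granting a clean braid factor, the five elements $\{v',v's,v't,v'st,v'(sts\cdots)\}$ must be shown to lie in $[v,w]$ \emph{and} to be closed under the meet and join \emph{of the lattice $[v,w]$}. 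There is no ambient lattice in which to compute these joins --- the Bruhat order on $G$ is not a lattice --- and the computation from Proposition~\ref{prop:weak modular} does not transplant, because in the Bruhat order $[v,w]$ is not isomorphic to $\weakpoi{v^{-1}w}$ and the poset structure above $v$ is not governed by prefixes the way it is in the weak order.

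The paper's proof avoids reduced words entirely. It invokes Jantzen's classification of length-$3$ Bruhat intervals: each is the $2$-crown of Figure~\ref{fig:2-crown} (not a lattice), the boolean poset on three elements, or the poset of Figure~\ref{fig:non-modular lattice} (a lattice but not modular). Hence in a modular-lattice interval every length-$3$ subinterval is boolean. It then upgrades this local statement to a global one using Bj\"orner's result that an open Bruhat interval of length at least $4$ is the face poset of a regular CW sphere of dimension at least $2$, hence connected, together with a lemma of Stanley (see Grabiner) showing that this local boolean-ness plus connectivity forces the whole interval to be boolean. These two ingredients --- the rank-$3$ classification and the local-to-global lemma --- carry the entire content of the theorem, and your proposal contains neither; I would restructure the argument around them rather than trying to push the subword analysis through.
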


\begin{proof}
If $[v,w]$ is boolean, then it is also a modular lattice, by definition.

Now suppose that $[v,w]$ is a modular lattice. If the interval has rank $2$ (that is, $\ell(w) - \ell(v) = 2$), then this interval is isomorphic to the boolean poset on two elements. Jantzen showed that a rank $3$ interval in $G$ must have one of the following forms \cite{jantzen}:
\begin{itemize}
\item it is isomorphic to the poset in Figure~\ref{fig:2-crown} (in which case it is not a lattice),
\item it is isomorphic to the boolean poset on three elements, or
\item it is isomorphic to the poset in Figure~\ref{fig:non-modular lattice} (in which case it is not a modular lattice).
\end{itemize}
Thus, if $[v,w]$ is a modular lattice, then every rank $3$ interval that it contains must be a boolean lattice. Now consider any interval $[a,b]$ in $[v,w]$, such that $[a,b]$ has rank at least $4$. Bj\"orner showed in \cite{bjorner} that the open interval $(a,b)$ is isomorphic to the face poset of a regular CW-decomposition of a sphere of dimension $\ell(b) - \ell(a) - 2$. Because $\ell(b) - \ell(a) - 2 \ge 2$, this face poset is connected. Thus, it follows from a result of Stanley (see Grabiner's discussion and proof \cite[Lemma 8]{grabiner}), that $[v,w]$ itself is boolean.
\end{proof}

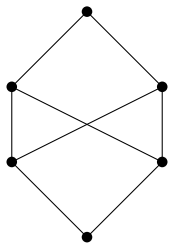
\begin{figure}[htbp]
\begin{tikzpicture}[scale=1]
\coordinate (xy) at (0,0);
\coordinate (xsy) at (-1,1);
\coordinate (xty) at (1,1);
\coordinate (xsty) at (-1,2);
\coordinate (xtsy) at (1,2);
\coordinate (xstsy) at (0,3);
\foreach \x in {xy,xsy,xty,xsty,xtsy,xstsy} {\fill (\x) circle (2pt);}
\foreach \x in {xsy,xty} {\draw (\x) -- (xy); \draw (\x) -- (xsty); \draw (\x) -- (xtsy);}
\foreach \x in {xsty,xtsy} {\draw (\x) -- (xstsy);}
\end{tikzpicture}
\caption{One of the three possible forms of a rank $3$ interval in a Coxeter group.}\label{fig:2-crown}
\end{figure}

Therefore, in the Bruhat order, the hierarchy of intervals is
$$\Big\{\text{boolean}\Big\} = \Big\{\text{distributive}\Big\} = \Big\{\text{modular}\Big\} \subsetneq \Big\{\text{lattice}\Big\}.$$

\subsection{Arbitrary intervals in the weak order}

In fact, a classical result about the weak order implies that the goals of this section were already addressed in Section~\ref{sec:poi weak}.

\begin{proposition}[{\cite[Proposition 3.1.6]{bjorner brenti}}]\label{prop:weak intervals are pois}
If $v \weak w$, then $[v,w]_{\text{wk}} \cong \weakpoi{v^{-1}w}$.
\end{proposition}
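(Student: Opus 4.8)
The plan is to establish the isomorphism $[v,w]\weakint \cong \weakpoi{v^{-1}w}$ directly via the map $u \mapsto v^{-1}u$, showing it restricts to an order isomorphism between the two intervals. The key structural fact I would use about the weak order is its characterization in terms of lengths: for Coxeter group elements $a$ and $b$, one has $a \weak b$ if and only if $\ell(a) + \ell(a^{-1}b) = \ell(b)$, equivalently $b = a\cdot c$ with $\ell(b) = \ell(a) + \ell(c)$ (the product is ``length-additive''). This is the standard reformulation of Definition~\ref{defn:weak bruhat}: a reduced decomposition of $a$ is a prefix of one of $b$ precisely when appending a reduced decomposition of $a^{-1}b$ yields a reduced decomposition of $b$.

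First I would record that length-additive factorization is right-multiplicative in the appropriate sense: if $a \weak b$ and $b \weak c$ (equivalently $a \weak c$ through $b$), then concatenating reduced words shows $\ell(c) = \ell(a) + \ell(a^{-1}b) + \ell(b^{-1}c)$, and $a^{-1}b \weak a^{-1}c$. More precisely, the content I need is: for any $u$ with $v \weak u \weak w$, we have $v \weak u$ iff $\ell(v^{-1}u) + \ell(v) = \ell(u)$, and $u \weak w$ iff $\ell(u^{-1}w) + \ell(u) = \ell(w)$; combining these with $\ell(v) + \ell(v^{-1}w) = \ell(w)$ gives $\ell(v^{-1}u) + \ell(u^{-1}w) = \ell(v^{-1}w)$, which is exactly the statement that $v^{-1}u \weak v^{-1}w$ (taking $a = v^{-1}u$, and noting $(v^{-1}u)^{-1}(v^{-1}w) = u^{-1}w$). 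So $\phi(u) := v^{-1}u$ maps $[v,w]\weakint$ into $\weakpoi{v^{-1}w}$.

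Next I would check $\phi$ is a bijection onto $\weakpoi{v^{-1}w}$ with order-preserving inverse. The candidate inverse is $\psi(x) := vx$. Given $x \weak v^{-1}w$, I need $v \weak vx \weak w$: the first containment follows since $\ell(vx) = \ell(v) + \ell(x)$ would have to hold — and here I use the key subtlety that left-multiplication by $v$ is length-additive on $\weakpoi{v^{-1}w}$, which is where the right weak order (as opposed to a naive statement) matters and must be argued carefully. Concretely, $x \weak v^{-1}w$ means there is a reduced word for $v^{-1}w$ with a reduced prefix spelling $x$; prepending a reduced word for $v$ to a reduced word for $v^{-1}w$ gives a reduced word for $w$ (since $v \weak w$), inside which the prefix spelling $v$ followed by the segment spelling $x$ spells $vx$ reducedly, so $v \weak vx$; the remaining suffix spells $(vx)^{-1}w$, giving $vx \weak w$. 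That $\phi$ and $\psi$ preserve order is immediate from the same length-additivity bookkeeping: $u_1 \weak u_2$ in $[v,w]\weakint$ translates, via subtracting off $v$ on the left, to $v^{-1}u_1 \weak v^{-1}u_2$.

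The main obstacle, and the place to be most careful, is the interplay between left multiplication and the \emph{right} weak order: the isomorphism is effected by left-multiplying, yet the order is defined by right-prefixes, so every length-additivity claim must be traced back to the ``reduced word of $w$ obtained by concatenating a reduced word of $v$ with a reduced word of $v^{-1}w$'' picture rather than invoked as a symmetry. Once that bookkeeping is set up cleanly — ideally by first proving the single lemma ``$a \weak b \iff \ell(a) + \ell(a^{-1}b) = \ell(b)$'' and the chain-additivity corollary — the rest is a short verification that $\phi$ and $\psi = \phi^{-1}$ are mutually inverse order-preserving maps, hence $[v,w]\weakint \cong \weakpoi{v^{-1}w}$.
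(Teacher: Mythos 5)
Your proof is correct: the length-additivity characterization $a \weak b \iff \ell(a) + \ell(a^{-1}b) = \ell(b)$, together with the concatenation-of-reduced-words bookkeeping showing that $u \mapsto v^{-1}u$ and $x \mapsto vx$ are mutually inverse order-preserving maps between $[v,w]\weakint$ and $\weakpoi{v^{-1}w}$, is exactly the standard argument. The paper itself offers no proof of this proposition --- it is quoted directly from Bj\"orner and Brenti --- and your argument is essentially the one found in that reference, including the correct handling of the only delicate point, namely that left multiplication by $v$ is length-additive only on the ideal $\weakpoi{v^{-1}w}$ and not on the whole group.
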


We can now use Propositions~\ref{prop:weak modular} and~\ref{prop:weak boolean} to describe the well-behaved intervals in the weak order.

\begin{corollary}\label{cor:weak interval characterization}
Let $G$ be a Coxeter group, viewed as a poset under the weak order, and consider $v,w \in G$ with $v \weak w$. The interval $[v,w]\weakint$ is
\begin{itemize}
\item always a lattice, 
\item modular if and only if $v^{-1}w$ is fully commutative, 
\item distributive if and only if $v^{-1}w$ is fully commutative, and
\item boolean if and only if $v^{-1}w$ is the product of commuting generators.
\end{itemize}
\end{corollary}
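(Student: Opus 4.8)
The plan is to reduce every clause of the corollary to a statement about principal order ideals, using Proposition~\ref{prop:weak intervals are pois}: since $v \weak w$, we have the poset isomorphism $[v,w]\weakint \cong \weakpoi{v^{-1}w}$, so each property of the interval is equivalent to the same property of $\weakpoi{v^{-1}w}$, and the element $v^{-1}w$ is exactly the one whose full commutativity (respectively, commuting-product structure) the corollary refers to. In short, the corollary is a dictionary translation of the Section~\ref{sec:poi weak} results across this isomorphism.

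With the reduction in place, three of the four clauses are immediate. The ``always a lattice'' clause follows from the classical fact (Bj\"orner; see \cite[\S3]{bjorner brenti}) that the right weak order of any Coxeter group is a complete meet-semilattice, so the bounded ideal $\weakpoi{v^{-1}w}$, having a top element, is a lattice. The ``modular'' clause is Proposition~\ref{prop:weak modular} transported across the isomorphism, and the ``boolean'' clause is Proposition~\ref{prop:weak boolean} transported across it. For the ``distributive'' clause, the forward implication ``$v^{-1}w$ fully commutative $\Rightarrow$ distributive'' is Stembridge's theorem \cite{stembridge} (already invoked in the proof of Proposition~\ref{prop:weak modular}), and the reverse implication is free: a distributive lattice is modular, so if $v^{-1}w$ were not fully commutative then $[v,w]\weakint$ would fail to be modular by the previous clause, hence could not be distributive. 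Chaining these gives ``distributive $\iff v^{-1}w$ fully commutative,'' which coincides with the modular condition, matching the statement.

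I do not expect a genuine obstacle here: essentially all the content is carried by Proposition~\ref{prop:weak intervals are pois} together with Propositions~\ref{prop:weak modular} and~\ref{prop:weak boolean}. The one point deserving a sentence of care is that $G$ is allowed to be infinite, so one should observe that $[v,w]\weakint$ is nonetheless a finite poset---every element of $\weakpoi{v^{-1}w}$ has length at most $\ell(v^{-1}w)$---and, in any event, that the meet-semilattice structure of the weak order together with the top element $v^{-1}w$ already forces all joins to exist, so finiteness is not strictly needed even for the lattice claim.
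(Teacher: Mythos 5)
Your proposal is correct and follows essentially the same route as the paper: the paper derives the corollary immediately from Proposition~\ref{prop:weak intervals are pois} combined with Propositions~\ref{prop:weak modular} and~\ref{prop:weak boolean} (with Stembridge supplying distributivity), exactly as you do. Your extra remark about finiteness of the interval in an infinite Coxeter group is a reasonable point of care that the paper leaves implicit, but it does not change the argument.
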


\section{Intervals above atoms in the symmetric group}\label{sec:intervals over atoms in Sn}

The results of Section~\ref{sec:interval hierarchy} are encouraging for understanding the structure of arbitrary intervals in either of these partial orders. However, it is not easy to create an analogue of Table~\ref{table:poi lattices in Sn} for arbitrary intervals. Indeed, despite the results above, there are a number of ways in which the characterization question for intervals is more complex than the question for principal order ideals---particularly for the Bruhat order. Suppose that $v \le w$ in the Bruhat order of a Coxeter group, and fix a reduced decomposition $s_1 \cdots s_{\ell} \in R(w)$. One added complexity is that there might be more than one reduced decomposition of $v$ appearing as a subword of $s_1 \cdots s_{\ell(w)}$, and possibly even more than one occurrence of the same reduced decomposition of $v$. Another intricacy is that while we needed all generators to be distinct for a principal order ideal, this might not be the case among the generators in $s_1 \cdots s_{\ell(w)}$ that are outside of $s_{i_1}\cdots s_{i_{\ell(v)}} \in R(v)$, because identical generators might be prevented from interacting by some $s_{i_j}$. Finally, and perhaps most challenging of all, it may be impossible to find a reduced decomposition of $v$ appearing as a factor inside of a reduced decomposition of $w$. That is, it may be necessary for reduced decompositions of $w$ to ``interrupt'' reduced decompositions of $v$, as is the case for the interval $[2143,2341] \subset \symm_4$, since $R(2143) = \{\s_1\s_3,\s_3\s_1\}$ and $R(2341) = \{\s_1\s_2\s_3\}$.

In this section, we give characterizations of boolean (and distributive, and modular) intervals and of lattice intervals in the Bruhat order, for which the bottom element has rank $1$. This will build on the ``rank $0$'' case of principal order ideals described in Section~\ref{sec:poi lattices}. Already, this is complicated to state, and one can see the difficulties described above coming into play. We will do similarly for the weak order, which has a more elegant answer, as suggested by Corollary~\ref{cor:weak interval characterization}. In both settings, we will enumerate these intervals, with rather pleasing results.

\subsection{Intervals above atoms in the Bruhat order}

By Theorem~\ref{thm:bruhat intervals modular=boolean}, to understand the well-behaved lattices of the form $[\s_k,w]$ in the symmetric group, it is almost enough to understand when such an interval is a boolean poset.

\begin{theorem}\label{thm:boolean interval at rank 1}
Consider $\symm_n$  as a poset under the Bruhat order. An interval $[\s_k,w]$ is boolean (equivalently, a modular lattice, or a distributive lattice) if and only if
$x \s_k y \in R(w)$ such that
\begin{itemize}
\item $x$ and $y$ are each (possibly empty) products of distinct generators,
\item the only generators that may appear in both $x$ and $y$ are $\s_{k\pm1}$, and
\item $\s_k$ does not appear in $x$ or $y$.
\end{itemize}
\end{theorem}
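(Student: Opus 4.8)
The plan is to characterize when $[\s_k, w]$ is boolean by analyzing which ways a copy of $\s_k$ can sit inside a reduced decomposition of $w$, and to use Theorem~\ref{thm:bruhat intervals modular=boolean} to get the ``modular/distributive'' conclusions for free. For the forward direction, suppose $[\s_k,w]$ is boolean of rank $\ell(w)-1$. The elements covering $\s_k$ are exactly the $\s_k s$ or $s\s_k$ that lie below $w$, and more generally the interval being boolean forces a great deal of rigidity on how letters can be deleted from a reduced word for $w$ to reach $\s_k$. First I would fix a reduced decomposition $s_1\cdots s_\ell \in R(w)$ and an occurrence of a reduced word for $\s_k$ as a subword; since $R(\s_k)=\{\s_k\}$, this occurrence is a single position, so $w$ has a reduced decomposition of the form $x\,\s_k\,y$ with $\ell(x)+\ell(y) = \ell(w)-1$. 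The substance is then to show that booleanness forces the three bulleted conditions on $x$ and $y$.

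The key steps for the forward implication: (1) If some generator $s$ is repeated within $x$ (or within $y$), then by the argument in the proof of Theorem~\ref{thm:lattice iff distinct letters in strong} there is a non-commuting $t$ between the two copies, and the four elements $s,t,st,ts$ sit below $x$ (resp.\ $y$), hence below $w$, but all four also lie above $\s_k$ is the subtle point — actually one localizes instead: the sub-interval of $[\s_k,w]$ obtained by deleting all letters of $y$ and keeping a reduced word for $x\s_k$ must itself be boolean, and a repeat inside $x$ produces the non-lattice crown of Figure~\ref{fig:poset examples}(a) inside it, contradiction. (2) If $\s_k$ appears in $x$, say $x = x' \s_k x''$, then the two copies of $\s_k$ flanking the middle $\s_k$ (together with whatever non-commuting letter sits between, using that $\s_k$ is an involution) allow a Coxeter relation to ``collapse'' when we delete letters — concretely, $x' \s_k x'' \s_k$ can reduce, violating that $[\s_k,w]$ has full rank $\ell(w)-1$, i.e.\ that every way of deleting $\ell(w)-1$ letters that keeps $\s_k$ gives distinct elements. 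This is exactly the ``a generator lands next to a copy of itself'' obstruction flagged in Section~\ref{sec:boolean review}. (3) If a generator $s \notin \{\s_{k\pm 1}\}$ appears in both $x$ and $y$, then $s$ commutes with $\s_k$, so $w$ has a reduced word with the two copies of $s$ separated only by letters commuting with neither... — more carefully, one slides the copies of $s$ toward the central $\s_k$; since $s\s_k = \s_k s$, they can be brought adjacent across $\s_k$ unless blocked, and the blocking letters generate (with $s$) a repeat-plus-noncommuting-letter configuration, again giving a crown; so one reduces to the case where $w$ has a reduced word $\cdots s\,\s_k\, s\cdots$, and then $s,\s_k$ with $s\s_k=\s_k s$ forces $s s = e$ — a collapse. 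The cleanest way to run (1)–(3) uniformly is: booleanness of $[\s_k,w]$ is equivalent (via the subword property) to the statement that every reduced word $x\s_k y$ for $w$ has no pair of deletable letters whose removal triggers a relation, and one checks that each of the forbidden configurations in (1)–(3) produces such a collapsing pair.

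For the converse: given $x\,\s_k\,y \in R(w)$ satisfying the three conditions, I would show directly that $[\s_k, w]$ is isomorphic to the boolean poset on the set of letters of $x$ and $y$ (a multiset only possibly because $\s_{k\pm1}$ can occur in both $x$ and $y$ — but those are distinct occurrences and should be treated as distinct elements of $S$-with-positions). Because $x$ and $y$ are each products of distinct generators, $x\s_k y$ is a reduced word in which deleting any subset of the non-$\s_k$ letters yields a reduced word (no relation can fire: within $x$ or $y$ there are no repeats to square, $\s_k$ is not repeated, and the only cross-letters are the commuting $\s_{k\pm1}$'s which can be freely slid past $\s_k$), so every subword containing the central $\s_k$ is reduced and distinct, giving an order-preserving bijection from subsets to $[\s_k,w]$. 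Conversely every $v\in[\s_k,w]$ arises as such a subword by the subword property, and the map is an order isomorphism. The main obstacle I anticipate is making step (3) fully rigorous — precisely controlling the interaction of a generator $s$ (with $s\s_k = \s_k s$) occurring on both sides of $\s_k$, since one must argue that the separating letters cannot ``save'' the configuration, which amounts to a careful application of the subword property and the fact (Section~\ref{sec:boolean review}) that any collapse ultimately forces a generator to be squared. Handling the $\s_{k\pm1}$ boundary case correctly — explaining why $\s_{k-1}$ and $\s_{k+1}$ are exactly the generators that may safely appear on both sides (they commute with each other and braid with $\s_k$ in a controlled way, but a single copy on each side never collapses) — is the delicate bookkeeping that the whole statement hinges on.
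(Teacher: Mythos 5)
Your converse direction is sound and matches the spirit of the paper's (brief) treatment: with a single central $\s_k$ and the three conditions in force, every subword containing that $\s_k$ is reduced and the subword map is an order isomorphism onto a boolean poset. The forward direction, however, has a genuine structural gap. The theorem is an \emph{existential} statement --- some reduced decomposition $x\s_k y\in R(w)$ satisfies the three bullets --- but you fix an arbitrary reduced decomposition and an arbitrary occurrence of $\s_k$ in it, and then try to show that booleanness forces the conditions on \emph{that} choice of $x$ and $y$. This cannot work: for $w=\s_k\s_{k-1}\s_k$ the interval $[\s_k,w]$ is boolean, yet choosing the first occurrence of $\s_k$ in the reduced word $\s_k\s_{k-1}\s_k$ gives $y=\s_{k-1}\s_k$, which contains $\s_k$ and so violates your step (2); the witness required by the theorem is the \emph{other} reduced word $\s_{k-1}\s_k\s_{k-1}$. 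Your step (2) is false as a blanket claim ($x'\s_k x''\s_k y$ is a reduced word for $w$, so nothing ``collapses'' merely because $\s_k$ repeats), and the entire difficulty of the forward direction is precisely the one you skip: producing a reduced decomposition of $w$ with exactly one copy of $\s_k$.

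The paper's proof spends essentially all of its effort on this point. Given $a\s_k b\s_k c\in R(w)$ with no $\s_k$ in $b$, booleanness forbids $b$ from containing both $\s_{k-1}$ and $\s_{k+1}$ (otherwise $[\s_k,w]$ contains the non-modular rank-$3$ interval of Figure~\ref{fig:non-modular lattice}), so after commutations one reaches $a'\s_k\s_{k-1}\s_k c'$ and a braid move strictly decreases the number of copies of $\s_k$; iterating yields the required $x\s_k y$ with a unique $\s_k$. This step is also where the type-$A$ hypothesis ($m(s,t)\le 3$) is used --- the paper's example $[\ul{1}2,\ul{2}\,\ul{1}]\subset\symm_2^B$ shows a boolean interval whose top element admits \emph{no} reduced word with a single copy of the bottom generator --- so any correct proof must invoke it somewhere, and yours never does. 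Your steps (1) and (3) are in the right spirit (repeats within one side, or a commuting generator on both sides, let a generator land next to itself), but they only become available after the single-$\s_k$ normalization has been established.
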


\begin{proof}
First observe that if there is such an $x \s_k y \in R(w)$, then certainly $[\s_k,w]$ is boolean.

Now suppose that $[\s_k,w]$ is boolean. We first note that we can find a reduced decomposition for $w$ in which only one copy of $\s_k$ appears. Suppose $a\s_kb\s_kc \in R(w)$, with $\s_k$ not appearing in the product $b$. Let $N$ be the number of copies of $\s_k$ that appear in this decomposition. If $b$ were to contain both $\s_{k-1}$ and $\s_{k+1}$, then the interval $[\s_k,w]$ would contain the interval depicted in Figure~\ref{fig:non-modular lattice}, which is not boolean.  Thus, without loss of generality, the product $b$ contains $\s_{k-1}$ and not $\s_{k+1}$, and so we can do commutation moves on this product to find a reduced decomposition $a' \s_k \s_{k-1} \s_k c' \in R(w)$. This product still has $N$ copies of $\s_k$, and a braid move produces the reduced decomposition $a' \s_{k-1} \s_k \s_{k-1} c' \in R(w)$ having $N-1$ copies of $\s_k$. In this way, we can systematically produce reduced decompositions of $w$ with fewer copies of $\s_k$ until we find $x \s_k y \in R(w)$ with exactly one copy of $\s_k$.

This particular decomposition is helpful because it means that in the interval $[\s_k,w]$, we know exactly what letters must get deleted from the reduced decomposition $x \s_k y \in R(w)$, simplifying many of the concerns discussed at the opening of this section. Indeed, all of the generators in $x$ and $y$ must be deleted. If there are any repeated generators that could land next to each other by deletions of these letters, then the interval will fail to be boolean. This scenario is avoided if and only if $x$ and $y$ are each, themselves, products of distinct generators, and if the only generators they have in common cannot commute past $\s_k$.
\end{proof}

The three conditions in the statement of Theorem~\ref{thm:boolean interval at rank 1} have some interesting implications for the reduced decompositions of such a $w$. Indeed, suppose that $x\s_ky \in R(w)$ meets the three conditions of the theorem, and contains $a$ copies of $\s_{k+1}$. First, due to those conditions, we must have $a \in \{0,1,2\}$. Certainly if $a = 0$, then $\s_{k+1}$ does not appear in any reduced decompositions of $w$. If, on the other hand, $a = 1$, then there is one copy of $\s_k$ (by the third requirement in the statement of the theorem), one copy of $\s_{k+1}$, and at most one copy of $\s_{k+2}$ (by the second requirement in the statement of the theorem). Thus, the only Coxeter moves that can involve the one copy of $\s_{k+1}$ are commutations (not braids), and so in fact there is exactly one copy of $\s_{k+1}$ in all reduced decompositions of $R(w)$. Finally, suppose that $a = 2$. Then the two copies of $\s_{k+1}$ are separated by the one copy of $\s_k$ in the decomposition. Although there may be other reduced decompositions of $w$ containing just one copy of $\s_{k+1}$, such a decomposition would necessarily contain more than one copy of $\s_k$, and thus would not satisfy the conditions of Theorem~\ref{thm:boolean interval at rank 1}.

Notice that Theorem~\ref{thm:boolean interval at rank 1} relies on the fact that the exponents $m(s,t)$ in $\symm_n$ are all $2$ or $3$. In contrast, boolean intervals in the hyperoctahedral group need not have the property that one can find a reduced decomposition of the top element of the interval containing only one copy of the bottom element of the interval.

\begin{example}
The interval $[\ul{1}2,\ul{2}\ul{1}] \subset \symm_2^B$ is boolean, even though $\ul{1}2 = \s_0$ appears twice in the only reduced decomposition $\s_0\s_1\s_0$ of $\ul{2}\ul{1}$.
\end{example}

Recall that there are $F_{2n-1}$ boolean principal order ideals in the Bruhat order on $\symm_n$. The number of boolean intervals whose minimum elements are atoms is notably larger as we see in the next theorem. In fact, only for $k \in [1,n-1]$ are there fewer boleean intervals over a particular atom $\s_k$ than there are boolean principal order ideals. For comparison, see Table~\ref{table:boolean interval at rank 1 count} below.

\begin{theorem}\label{thm:boolean interval at rank 1 count}
Fix $n > 2$. Consider $\symm_n$ as a poset under the Bruhat order, and fix $k \in [1,n-1]$. The number of boolean intervals (equivalently, modular lattices, or distributive lattices) of the form $[\s_k,w]$ is
\begin{equation}\label{eqn:boolean interval count}
t(n,k) := \begin{cases}
4F_{2n-4} & \text{ if } k \in \{1,n-1\}, \text{ and}\\
16F_{2k-2}F_{2(n-k)-2} & \text { if } k \in [2,n-2].
\end{cases}
\end{equation}
where $F_i$ is the $i$th Fibonacci number.
\end{theorem}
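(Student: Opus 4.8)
The plan is to count, for each atom $\s_k$, the reduced decompositions $x\s_k y$ satisfying the three conditions of Theorem~\ref{thm:boolean interval at rank 1}, but to do so up to the equivalence ``$x\s_k y$ and $x'\s_k y'$ determine the same top element $w$.'' The key observation, already flagged after Theorem~\ref{thm:boolean interval at rank 1}, is that such a $w$ has a very rigid reduced-word structure: there is exactly one copy of $\s_k$, and the generators appearing in $x$ are disjoint from those in $y$ except possibly for $\s_{k-1}$ and $\s_{k+1}$. So $w$ splits as a ``left piece'' built from $\{\s_1,\dots,\s_{k-1}\}$ (possibly borrowing one extra $\s_{k-1}$ that sits just left of $\s_k$) and a ``right piece'' built from $\{\s_{k+1},\dots,\s_{n-1}\}$ (possibly borrowing one extra $\s_{k+1}$ just right of $\s_k$). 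First I would make this decomposition precise: show that the interval $[\s_k,w]$ being boolean forces $w$ to be (a product of) distinct generators once we excise the single $\s_k$, hence $w$ is determined by (i) a $321$- and $3412$-avoiding permutation $u^{-}$ on the window $\{1,\dots,k\}$ whose support lies in $\{\s_1,\dots,\s_{k-1}\}$, together with a bit recording whether a second $\s_{k-1}$ is present; and symmetrically (ii) a corresponding object $u^{+}$ on $\{k+1,\dots,n\}$ with a bit for a second $\s_{k+1}$. The left and right data are independent, so the count factors as a product.

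Next I would identify each factor with a Fibonacci count. By Corollary~\ref{cor:what it means to avoid 321 and 3412} and Table~\ref{table:poi lattice enumerations in Sn}, the permutations that are products of distinct generators on $k$ letters number $F_{2k-1}$. But here the ``left'' piece lives in $\symm_k$ with the extra freedom of an optional second copy of $\s_{k-1}$ adjacent to the boundary, and the constraint that $\s_k$ itself is excluded — this is exactly the kind of refinement that should turn $F_{2k-1}$ into $F_{2k-2}$ times a small constant. Concretely, I would set up a bijection (or a direct transfer-matrix / recursion argument) showing that the number of valid left pieces over $\s_k$, $k\in[2,n-2]$, is $4F_{2k-2}$, and likewise $4F_{2(n-k)-2}$ on the right, giving $16F_{2k-2}F_{2(n-k)-2}$; the factor $4$ absorbs the two boundary bits and the position of $\s_{k-1}$ relative to the rest of the left word. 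When $k\in\{1,n-1\}$ one of the two sides is empty (there are no generators $\s_1,\dots,\s_{k-1}$ when $k=1$), so only the single nontrivial side contributes, collapsing $16F_{2k-2}F_{2(n-k)-2}$ to $4F_{2n-4}$; I would check this boundary case separately and confirm it matches the stated $t(n,1)=4F_{2n-4}$, also verifying small $n$ by hand against Table~\ref{table:boolean interval at rank 1 count}.

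The main obstacle I anticipate is the well-definedness and injectivity of the ``$w \mapsto (\text{left data}, \text{right data})$'' map: I must rule out that two genuinely different pairs of reduced words produce the same $w$, and conversely that every $w$ arising from a boolean interval $[\s_k,w]$ has a unique canonical form $x\s_k y$ of the prescribed shape. The post-theorem discussion (the analysis of $a\in\{0,1,2\}$ copies of $\s_{k+1}$, and the uniqueness of the single-$\s_k$ reduced word obtained by the braid-move reduction in the proof of Theorem~\ref{thm:boolean interval at rank 1}) already does most of this work, so I would lean on it heavily; the remaining care is purely in the bookkeeping of the two boundary generators $\s_{k\pm1}$, which interact with both halves. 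Once the factorization is rigorously in place, the Fibonacci identities are routine — essentially the same recursion $F_{2m-1}=F_{2m-2}+F_{2m-3}$ that underlies the $F_{2n-1}$ count for principal order ideals in Table~\ref{table:poi lattice enumerations in Sn}.
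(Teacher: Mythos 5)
Your overall architecture is the same as the paper's: reduce to a one-sided count by splitting $w$ at the unique $\s_k$ into a left piece on $\{\s_1,\dots,\s_{k-1}\}$ and a right piece on $\{\s_{k+1},\dots,\s_{n-1}\}$ (plus the boundary generators $\s_{k\pm1}$), and multiply. The paper does precisely this, in the form $t(n,k)=t(n-k+1,1)\cdot t(k+1,1)$, after first establishing the one-sided base case $t(n,1)=4F_{2n-4}$. Your well-definedness worries are legitimate but are handled essentially as you anticipate, using the post-theorem discussion of how many copies of $\s_{k+1}$ can occur.

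The genuine gap is the one-sided count itself, which is the heart of the proof and which you leave as ``a bijection or transfer-matrix argument should give $4F_{2m-2}$.'' Moreover, your proposed explanation of the factor $4$ --- that it ``absorbs the two boundary bits'' --- is not how that factor arises, and following it would lead you astray. In the paper, $t(n,1)$ is computed by partitioning according to whether $x\s_1y$ contains $0$, $1$, or $2$ copies of $\s_2$; the three parts have sizes $F_{2n-5}$, $2\left(F_{2n-3}-F_{2n-5}\right)=2F_{2n-4}$, and $F_{2n-7}+3F_{2n-6}=F_{2n-4}+F_{2n-6}$, and these sum to $4F_{2n-4}$ only after Fibonacci identities --- there is no clean $2\times 2$ structure of independent binary choices. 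The subcases also require care you have not accounted for: with one copy of $\s_2$, placing $\s_1$ to its left or right gives two distinct permutations, whereas with no $\s_2$ the placement of $\s_1$ is immaterial; and with two copies of $\s_2$ one must further split on whether $\s_3$ appears, counting the three inequivalent insertions of $\s_2\s_1\s_2$ around it. So the skeleton is right, but the central enumeration still needs to be done, and the heuristic you offer for it is incorrect.
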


\begin{proof}
Set $T(n,k) := \{w \in \symm_n : [\s_k,w] \text{ is boolean}\}$, and thus $t(n,k) = |T(n,k)|$. Note that, by symmetry, $t(n,k) = t(n,n-k)$. We begin by computing $t(n,1) = t(n,n-1)$, and then use this to compute $t(n,k)$ for $k \in [2,n-2]$.

Consider the implications of Theorem~\ref{thm:boolean interval at rank 1} when $k = 1$: the permutation $w$ must have a reduced decomposition $x\s_1y$ such that $x$ and $y$ are products of distinct generators, the only generator they might have in common is $\s_2$, and neither contains $\s_1$. The set $T(n,1)$ can be partitioned into three parts, based on how many copies of $\s_2$ appear in $x \sigma_1y$:
\begin{itemize}
\item $T_0(n,1) := \{w \in T(n,1) : w = x\s_1y \text{ contains no } \s_2\}$,
\item $T_1(n,1) := \{w \in T(n,1) : w = x\s_1y \text{ contains one } \s_2\}$, and
\item $T_2(n,1) := \{w \in T(n,1) : w = x\s_1y \text{ contains two } \s_2\text{s}\}$.
\end{itemize}
As discussed after the proof of Theorem~\ref{thm:boolean interval at rank 1}, this decomposition is well-defined, and thus $t(n,1)$ is the sum of the sizes of these three sets.

The first set, $T_0(n,1)$ can be described as taking $xy$, a product of distinct generators from $\{\s_3,\ldots,\s_{n-1}\}$, and multiplying it by $\s_1$. Because $\s_1$ commutes with all elements of $\{\s_3,\ldots,\s_{n-1}\}$, the placement of $\s_1$ does not affect the product. As shown in Table~\ref{table:poi lattice enumerations in Sn}, there are $F_{2(n-2)-1}$ such products $xy$.

For $T_1(n,1)$, take $xy$, a product of distinct generators from $\{\s_2,\ldots,\s_{n-1}\}$ that must include $\s_2$, and multiply it on the left or the right (these are distinct permutations), by $\s_1$. There are $F_{2(n-1)-1}$ products of distinct generators from $\{\s_2,\ldots,\s_{n-1}\}$, and $F_{2(n-2)-1}$ of those are actually products of distinct generators from $\{\s_3,\ldots,\s_{n-1}\}$. Thus there are $F_{2(n-1)-1} - F_{2(n-2)-1}$ products of distinct generators from $\{\s_2,\ldots,\s_{n-1}\}$ that must include $\s_2$, so
$$|T_1(n,1)| = 2\left(F_{2(n-1)-1} - F_{2(n-2)-1}\right) = 2F_{2n-4}.$$

Finally, the set $T_2(n,1)$ can be described easily in terms of whether or not it involves $\s_3$. If there is no $\s_3$, then take any product of distinct generators from $\{\s_4,\ldots,\s_{n-1}\}$ and insert $\s_2\s_1\s_2$ anywhere among the generators in that product. Because $\s_i$ and $\s_j$ commute for $i < 3 < j$, precise positioning does not affect the overall product. This yields $F_{2(n-3)-1}$ elements. On the other hand, if there is a copy of $\s_3$, then we can start with a product of distinct generators from $\{\s_3,\ldots,\s_{n-1}\}$ that must include $\s_3$, and insert $\s_2\s_1\s_2$ in any of three possible ways around this $\s_3$:
$$\s_2\s_1\s_2\s_3 \ , \  \s_2\s_1\s_3 \s_2\ , \ \s_3\s_2\s_1\s_2.$$
Note that the relative positions of all other generators will not affect the overall product. Thus
\begin{align*}
|T_2(n,1)| &= F_{2(n-3)-1} + 3\left(F_{2(n-2)-1} - F_{2(n-3)-1}\right)\\
&= F_{2n-7} + 3F_{2n-6}\\
&= F_{2n-5} + 2F_{2n-6}\\
&= F_{2n-4} + F_{2n-6}.
\end{align*}

Therefore
$$t(n,1) = F_{2n-5} + 2F_{2n-4} + F_{2n-4} + F_{2n-6} = 4F_{2n-4}.$$

For $k \in [2,n-2]$, we can form elements of $T(n,k)$ from elements $u \in T(n-k+1,1)$ and $v \in T(k+1,k)$. Then take the reduced decompositions $u = x_u\s_1y_u$ and $v = x_v\s_{k-1}y_v$ described by Theorem~\ref{thm:boolean interval at rank 1}. Let $u' = x_u' \s_k y'_u$ be obtained from $x_u\s_1y_u$ by ``shifting'' all generators according to $\s_i \mapsto \s_{i+k-1}$. Then
$$(x_u'x_v)\s_k(y_u'y_v) \in T(n,k).$$
Moreover, because $\s_i$ and $\s_j$ commute for $i < k < j$, the product $(x_u'x_v)\s_k(y_u'y_v)$ is equal to the product formed by any other interweaving of the letters to the left of $\s_k$ and similarly of those to its right. Each element of $T(n,k)$ can be formed in this way, and given an element of $T(n,k)$ it is straightforward to compute its corresponding $u \in T(n-k+1,1)$ and $v \in T(k+1,k)$. Therefore
\begin{align*}
t(n,k) &= t(n-k+1,1) \cdot t(k+1,k)\\
&= t(n-k+1,1) \cdot t(k+1,1)\\
&= 4F_{2(n-k+1)-4} \cdot 4F_{2(k+1)-4},
\end{align*}
as desired.
\end{proof}

The reader can confirm the $n \in [3,4]$ cases of this result by looking ahead to Figure~\ref{fig:boolean over support bruhat}. The data computed in Theorem~\ref{thm:boolean interval at rank 1 count} is shown in Table~\ref{table:boolean interval at rank 1 count} for small values of $n$. The row sums of the table are $8$ times the entries in A054444 of \cite{oeis}. The triangle described by $k \in [2,n-2]$ is $16$ times the entries of A141678 of \cite{oeis}.

\begin{table}[htbp]
{\renewcommand{\arraystretch}{2}$\begin{array}{c||c|c|c|c|c|c|c|c||c}
& k = 1 & 2 & 3 & 4 & 5 & 6 & 7 & 8 & \sum_k\\
\hline
\hline
n = 3 & 4 & 4 & & & & & & & 8\\
\hline
4 & 12 & 16 & 12 & & & & & & 40\\
\hline
5 & 32 & 48 & 48 & 32 & & & & & 160\\
\hline
6 & 84 & 128 & 144 & 128 & 84 & & & & 568\\
\hline
7 & 220 & 336 & 384 & 384 & 336 & 220 & & & 1880\\
\hline
8 & 576 & 880 & 1008 & 1024 & 1008 & 880 & 576 & & 5952\\
\hline
9 & 1508 & 2304 & 2640 & 2688 & 2688 & 2640 & 2304 & 1508 & 18280
\end{array}$}
\vspace{.1in}
\caption{Number of boolean intervals (equivalently, modular lattices, or distributive lattices) $[\s_k,w]$ in the Bruhat order on $\symm_n$, for $n \in [3,9]$, and the total number of boolean intervals  (equivalently, modular lattices, or distributive lattices) over all atoms.}\label{table:boolean interval at rank 1 count}
\end{table}

We close this section by considering the larger class of lattice intervals above atoms in the Bruhat order. Example~\ref{ex:non-modular lattice} foreshadows the characterization quite well. To set the stage, recall Figure~\ref{fig:2-crown}, which is not a lattice.

\begin{lemma}\label{lem:avoiding a 2-crown}
Consider $\symm_n$ as a poset under the Bruhat order, and an interval $[\s_k, w]$. If there exists a reduced decomposition of $w$ in which a letter is repeated on some side of (any appearance of) $\s_k$, then $[\s_k, w]$ contains a subposet of the form depicted in Figure~\ref{fig:2-crown}, and thus $[\s_k,w]$ is not a lattice.
\end{lemma}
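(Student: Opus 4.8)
The plan is to produce the offending ``2-crown'' subposet of Figure~\ref{fig:2-crown} directly from the given reduced decomposition. Suppose $\s_k$ appears in some $r \in R(w)$ and that, on one side of this occurrence of $\s_k$, some generator is repeated. By commuting letters that do not interact with the relevant ones, I would first bring $r$ into a normal form isolating the two copies of that repeated generator $s$ together with the letter $t$ forced to lie between them (with $st \neq ts$, since $s$ is an involution), and isolating $\s_k$ on the appropriate side. The key structural point is that $\{s,t,st,ts,sts=tst\}$ sits inside the principal order ideal of the ``side'' factor, and all of these elements lie below $w$ but strictly above $\s_k$ only after we multiply through by $\s_k$; so the five elements $\s_k \cdot \{\,\cdot\,\}$ (suitably placed on the correct side of $\s_k$) form, together with $\s_k$ as the bottom and an appropriate top, a copy of Figure~\ref{fig:2-crown} inside $[\s_k, w]$. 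By the subword property (Definition~\ref{defn:strong bruhat} and the remark following it), each of these elements is indeed $\le w$, and each is $\ge \s_k$ because the displayed reduced decomposition of $w$ still contains $\s_k$ as a subword after the deletions.

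Concretely, I would write the normal form as $a\,\s_k\,b \in R(w)$ where, WLOG, the repetition occurs in $b$, say $b = b' s c s c' b''$ with $sc \neq cs$ and with the indicated copies of $s$ and $c$ chosen so that $b' c'b''$ together with $scs$ realizes a rank-$3$ interval of the desired shape. (Here I am using that in type $A$ all braids have length $3$, so $m(s,c) = 3$; the length-$3$ braid is what makes the five-element crown appear rather than a longer configuration.) Then the six permutations
$$\s_k\,b'b''c' \ \le\ \s_k\, b's b''c',\ \s_k\, b'c b''c' \ \le\ \s_k\, b'sc b''c',\ \s_k\, b'cs b''c' \ \le\ \s_k\, b'scsb''c' = w'$$
(for suitable $w' \le w$ obtained by deleting $a$ and the remaining letters of $b$ outside $b'scsc'b''$) are distinct and ordered exactly as in Figure~\ref{fig:2-crown}: the unique minimum, two atoms, two coatoms, and a maximum, with each atom below each coatom. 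Since Figure~\ref{fig:2-crown} has two atoms with no common upper bound inside the interval (equivalently two coatoms with no common lower bound), the join of the two atoms --- and dually the meet of the two coatoms --- fails to exist, so $[\s_k,w]$ is not a lattice.

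The main obstacle, and the only part requiring real care, is the bookkeeping that turns ``a letter is repeated on some side of $\s_k$'' into the clean normal form $a\,\s_k\,b$ with the repetition packaged as $b' s c s c' b''$, $sc \neq cs$. One must argue that if $s$ is repeated in $b$ with no non-commuting letter strictly between the two chosen copies, then the decomposition is not reduced (the two $s$'s would cancel after commutations), so some non-commuting $c$ does lie between them; then one commutes away everything between the copies of $s$ except for a single such $c$, which is legitimate precisely because those intervening letters commute with $s$. I expect no difficulty from the $\s_k$ side: $\s_k$ is simply carried along, since whatever we commute past it commutes past $\s_k$ too (or, if not, we choose the side of $\s_k$ containing the repetition to be the far side and never move anything across $\s_k$ at all). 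The remaining claims --- that the six listed elements are distinct, that the order relations among them are as in the figure, and that this forces failure of the lattice property --- are immediate from the subword property and the shape of Figure~\ref{fig:2-crown}, exactly paralleling the argument in the proof of Theorem~\ref{thm:lattice iff distinct letters in strong} and the discussion in Example~\ref{ex:non-modular lattice}.
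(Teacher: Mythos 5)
Your overall strategy---manufacturing a copy of Figure~\ref{fig:2-crown} out of the repeated letter $s$, a non-commuting letter trapped between its two occurrences, and the segment of the word connecting this configuration to $\s_k$---is the same as the paper's, but the step you yourself flag as ``the only part requiring real care'' is where the argument breaks. You claim that everything strictly between the two chosen copies of $s$ can be commuted away except for a single non-commuting letter $c$, ``because those intervening letters commute with $s$.'' They need not: in type $A$ the generator $s=\s_h$ has \emph{two} non-commuting neighbors, $\s_{h-1}$ and $\s_{h+1}$, and both can be trapped between adjacent occurrences of $\s_h$. For instance $R(3412)=\{\s_2\s_1\s_3\s_2,\ \s_2\s_3\s_1\s_2\}$, so for $w=\s_k\cdot\s_2\s_1\s_3\s_2$ with $k$ large, no reduced decomposition of $w$ contains your factor $s\,c\,s$; the normal form $b'\,s\,c\,s\,c'\,b''$ is unattainable and your six elements cannot be written down, even though the lemma's hypothesis holds (and its conclusion is true) for this $w$. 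The paper avoids this entirely by never asking for a consecutive factor: it extracts only the \emph{scattered} subword $\s_h\cdots\s_{h+1}\cdots\s_h\cdots\s_k$ from a reduced decomposition $a\,\s_h\,b\,\s_{h+1}\,c\,\s_h\,d\,\s_k\,e$ and realizes the crown as the genuine rank-$3$ interval $[d\s_k,\ \s_h\s_{h+1}\s_h\,d\s_k]$, i.e.\ by left-multiplying the suffix $d\s_k$ by $\s_h$ and $\s_{h+1}$ rather than by deleting letters in place. You should rebuild your construction along those lines.

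A secondary point to tighten: merely ``containing a subposet of the form of Figure~\ref{fig:2-crown}'' does not by itself preclude being a lattice (in the boolean lattice on four elements, two atoms together with two coatoms, the bottom, and the top form exactly such an induced subposet). What rescues the conclusion is either that the crown is realized as an \emph{interval}---a non-lattice subinterval of $[\s_k,w]$ forces $[\s_k,w]$ to be a non-lattice, since any join computed in the ambient interval would have to land in the subinterval---or a length argument: your two ``atoms'' have length $\ell$ and your two ``coatoms'' have length $\ell+1$, so a join of the atoms would have length exactly $\ell+1$ and lie below both coatoms, forcing the coatoms to coincide. Either observation takes one line, but you need to state one of them; as written you assert the nonexistence of the join without using the fact that the two middle levels are adjacent in rank. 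You would also need to verify that your six displayed words are reduced (subwords of reduced words need not be), which the paper's interval formulation handles more cleanly.
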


\begin{proof}
Suppose that there is such a reduced decomposition, and that $\s_h$ is the repeated letter. To be reduced, we must have, without loss of generality,
$$a \s_h b \s_{h+1} c \s_h d \s_k e \in R(w),$$
where $a$, $b$, $c$, $d$, and $e$ are each (possibly empty) products of generators. Then the interval $[d\s_k, \s_h\s_{h+1}\s_hd\s_k] \subseteq [\s_k,w]$ has the form depicted in Figure~\ref{fig:2-crown}.
\end{proof}

\begin{theorem}\label{thm:lattice interval at rank 1}
Consider $\symm_n$ as a poset under the Bruhat order. An interval $[\s_k,w]$ is a lattice if and only if $w$ either has a reduced word as described by Theorem~\ref{thm:boolean interval at rank 1} (in which case $[\s_k,w]$ is boolean), or it has a reduced word of the form $x(\s_k\s_{k-1}\s_{k+1}\s_k)y \in R(w)$ such that
\begin{itemize}
\item $x$ and $y$ are each (possibly empty) products of distinct generators,
\item no generator appears in both $x$ and $y$, and
\item no element of $\{\s_k,\s_{k\pm1}\}$ appears in either $x$ or $y$.
\end{itemize}
\end{theorem}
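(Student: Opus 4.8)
\emph{Notation for the sketch.} Throughout write $c := \s_k\s_{k-1}\s_{k+1}\s_k$, and let $[\s_k,c]$ denote the ten‑element interval of Figure~\ref{fig:non-modular lattice}, which is a lattice by Example~\ref{ex:non-modular lattice}.

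\emph{Sufficiency.} If $w$ has a reduced word as in Theorem~\ref{thm:boolean interval at rank 1}, then $[\s_k,w]$ is boolean by that theorem, hence a lattice. Suppose instead $w$ has a reduced word $x\,c\,y$ of the stated form, and set $X=\mathrm{supp}(x)$, $Y=\mathrm{supp}(y)$; by hypothesis $X,Y$ are disjoint, each disjoint from $\{\s_k,\s_{k\pm1}\}$, and $|X|=\ell(x)$, $|Y|=\ell(y)$. I would show that
$$[\s_k,w]\ \cong\ [\s_k,c]\ \times\ 2^{|X|+|Y|},$$
a direct product of two lattices and therefore a lattice. Concretely one sends $v\in[\s_k,w]$ to the pair consisting of its ``core part'' (the letters of $c$ that survive in a subword realizing $v$ inside $xcy$) together with $\mathrm{supp}(v)\cap(X\cup Y)$. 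That this is a poset isomorphism rests on the fact that every generator of $X\cup Y$ occurs exactly once in $xcy$, and its only possible non‑commutations with the block $c$ are the single incidences $\s_{k-2}$ with $\s_{k-1}$ and $\s_{k+2}$ with $\s_{k+1}$; as in the discussion following Theorem~\ref{thm:boolean interval at rank 1}, no braid relation can fire, so deleting $X\cup Y$‑letters and deleting letters of $c$ are mutually independent and $\ell$ drops by exactly the number of letters removed.

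\emph{Necessity.} Assume $[\s_k,w]$ is a lattice; if it is boolean we are in the first case by Theorem~\ref{thm:boolean interval at rank 1}, so assume it is not. Two tools drive the argument: Lemma~\ref{lem:avoiding a 2-crown} (a lattice forces that no reduced word of $w$ repeats a letter on one side of any occurrence of $\s_k$), and the $\s_k$‑reduction move from the proof of Theorem~\ref{thm:boolean interval at rank 1} (if two consecutive $\s_k$'s in a reduced word are not separated by both $\s_{k-1}$ and $\s_{k+1}$, a commutation followed by a braid lowers the number of $\s_k$'s). Fix $r\in R(w)$ with the fewest occurrences of $\s_k$, say $N$ of them; by the reduction move, every pair of consecutive $\s_k$'s in $r$ is separated by both $\s_{k-1}$ and $\s_{k+1}$.
\begin{itemize}
\item If $N=1$, write $r=p\,\s_k\,q$; Lemma~\ref{lem:avoiding a 2-crown} makes $p,q$ products of distinct generators, and since $[\s_k,w]$ is not boolean, Theorem~\ref{thm:boolean interval at rank 1} forces $p$ and $q$ to share some generator $\s_h$ with $h\notin\{k-1,k,k+1\}$. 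As $\s_h$ commutes with $\s_k$, reducedness places a $\s_{h\pm1}$ between the two copies of $\s_h$, whence $[\s_k,\s_k\s_h\s_{h\pm1}\s_h]$ --- a copy of Figure~\ref{fig:2-crown} (checked directly in the degenerate case $\s_{h\pm1}\in\{\s_{k\pm1}\}$) --- sits inside $[\s_k,w]$, a contradiction. So $N\ge2$.
\item If $N\ge3$, taking one $\s_{k+1}$ from the first separating block and one $\s_{k-1}$ from the second exhibits $\s_k\s_{k+1}\s_k\s_{k-1}\s_k$ as a subword of $r$; inside the parabolic $\langle\s_{k-1},\s_k,\s_{k+1}\rangle\cong\symm_4$ the interval $[\s_k,\s_k\s_{k+1}\s_k\s_{k-1}\s_k]$ is not a lattice (two of its coatoms have no meet), contradicting that $[\s_k,w]$ is. So $N=2$.
\end{itemize}
Hence $r=p\,\s_k\,b\,\s_k\,q$ with $\s_k\notin p,b,q$ and $\s_{k-1},\s_{k+1}\in b$, and Lemma~\ref{lem:avoiding a 2-crown} (together with inspection of $[\s_k,\s_k\s_{k+1}\s_k\s_{k-1}\s_k]$ in the borderline cases) yields that $p,b,q$ are products of distinct generators and that $\s_{k-1},\s_{k+1}\notin p\cup q$. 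It then remains to (i) slide every letter of $b$ other than $\s_{k-1},\s_{k+1}$ out of $b$: each such letter commutes with $\s_k$ and with at least one of $\s_{k\pm1}$, and by Lemma~\ref{lem:avoiding a 2-crown} does not recur in $p$ or $q$, so it can be moved past an adjacent $\s_k$ into $p$ or $q$, peeled off from the outermost indices inward, keeping the two sides products of distinct generators avoiding $\{\s_k,\s_{k\pm1}\}$; and (ii) rule out a shared generator $\s_h$ between the resulting $x$ and $y$, since $\s_h$ together with an intervening $\s_{h\pm1}$ and one $\s_k$ would again force a copy of Figure~\ref{fig:2-crown} inside the lattice $[\s_k,w]$. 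This produces a reduced word $x\,c\,y$ of the stated form.

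I expect steps (i)–(ii) to be the main obstacle. The ``sliding out'' in (i) must be organized so that migrating letters do not collide and the final $x,y$ genuinely satisfy all three bullet conditions, and the recurring claim ``this configuration contains a $2$‑crown'' must be verified in the degenerate cases where the intervening generator is $\s_{k\pm1}$ itself (which does not commute with $\s_k$), so the relevant sub‑interval is not merely a shifted copy of $\poi{\s_h\s_{h\pm1}\s_h}$ and has to be computed by hand, in the spirit of Figures~\ref{fig:non-modular lattice} and~\ref{fig:2-crown}.
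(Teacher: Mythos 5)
Your proposal is correct and follows essentially the same route as the paper's proof: Lemma~\ref{lem:avoiding a 2-crown} plus the $\s_k$-count reduction to force the canonical form $x(\s_k\s_{k-1}\s_{k+1}\s_k)y$ in the necessity direction, and the direct-product decomposition $[\s_k,w]\cong[\s_k,\s_k\s_{k-1}\s_{k+1}\s_k]\times(\text{boolean lattice})$ for sufficiency, exactly as in Equation~\eqref{eqn:lattice interval as product}. The details you flag in steps (i)--(ii) do go through (and are precisely the points the paper treats tersely); the only inefficiency is your $N\ge3$ case, which follows immediately from Lemma~\ref{lem:avoiding a 2-crown} since $\s_k$ itself is then a letter repeated on one side of its first occurrence.
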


\begin{proof}
Following Theorem~\ref{thm:boolean interval at rank 1}, it suffices to consider when $[\s_k,w]$ is a non-boolean lattice. So $u \s_k v \in R(w)$ where $u$ and $v$ are each (possibly empty) products of generators. Recall Lemma~\ref{lem:avoiding a 2-crown}. Thus $u$ and $v$ are each (possibly empty) products of distinct generators. Moreover, suppose that $u$ and $v$ each contain $\s_h$ such that $|h - k| > 1$. To be reduced, we must have $\s_{h+1}$ appearing in $u$, without loss of generality. The generators $\s_h$ and $\s_k$ commute. Therefore $\s_h\s_{h+1}\s_k\s_h = \s_h\s_{h+1}\s_h\s_k$, and so $[\s_k,s_h\s_{h+1}\s_k\s_h] \subseteq [\s_k,w]$ has the form depicted in Figure~\ref{fig:2-crown}, contradicting the assumption that $[\s_k,w]$ is a lattice. 

Therefore, using these facts and Lemma~\ref{lem:avoiding a 2-crown}, we can write $u \s_k v$ in the form
$$a \s_k b \s_k c,$$
where the words $a$, $b$, and $c$ are each a product of distinct generators, the generator $\s_k$ does not appear in any of them, the only generators that may appear in both $a$ and the concatenation $bc$ are $\s_{k\pm1}$, and similarly for $ab$ and $c$. Moreover, because we are assuming that $[\s_k,w]$ is not boolean, we can assume that $b$ contains both $\s_{k-1}$ and $\s_{k+1}$. Then, in fact, we can apply commutation relations to this product to obtain $x(\s_k\s_{k-1}\s_{k+1}\s_k)y \in R(w)$. If any of the three requirements itemized in the theorem statement fails to hold, then the interval $[\s_k,w]$ will contain a copy of Figure~\ref{fig:2-crown} by Lemma~\ref{lem:avoiding a 2-crown}, and hence the interval will not be a lattice.

Now suppose that $w$ has a reduced decomposition as described in the latter statement of the theorem. Then, in fact,
\begin{equation}\label{eqn:lattice interval as product}
[\s_k,w] \cong [\s_k,\s_k\s_{k-1}\s_{k+1}\s_k] \times [\s_k,x'\s_ky'],
\end{equation}
where $x'$ is obtained from $x$ (and $y'$ from $y$) by replacing each generator according to the rule:
\begin{equation}\label{eqn:shifting letters}
\begin{cases}
\s_i \mapsto \s_{i+1} & \text{if } i < k-1, \text{ and}\\
\s_i \mapsto \s_{i-1} & \text{if } i > k+1.
\end{cases}
\end{equation}
To be precise, an element of $[\s_k,w]$ can be written as $u q v$, where $u$ is a subword of $x$, $q$ is a subword of $\s_k \s_{k-1}\s_{k+1}\s_k$ that contains at least one copy of $\s_k$, and $v$ is a subword of $y$. This corresponds to $(q, u'\s_kv') \in [\s_k,\s_k\s_{k-1}\s_{k+1}\s_k] \times [\s_k,x'\s_ky']$, where $u'$ and $v'$ are constructed via \eqref{eqn:shifting letters}. A covering relation can be written as $u_1 q_1 v_1 \lessdot u_2 q_2 v_2$, where a single letter has been deleted: $\epsilon_1$ is obtained by deleting a letter of $\epsilon_2$, for some choice of $\epsilon \in {u,q,v}$, requiring that $q_1$ contains at least one copy of $\s_k$. The elements in this covering relation correspond to $(q_i, u_i' \s_k v_i')$ in the product poset, respectively, and we will necessarily have $(q_1, u_1' \s_k v_1') \lessdot (q_2, u_2'\s_k v_2')$. The reverse implication holds as well.

The poset $[\s_k,\s_k\s_{k-1}\s_{k+1}\s_k]$ is the lattice depicted in Figure~\ref{fig:non-modular lattice} and the poset $[\s_k,x'\s_ky']$ is a boolean lattice by Theorem~\ref{thm:boolean interval at rank 1}, because of the requirements on $x$ and $y$ stated above. The direct product of two lattices is itself a lattice, completing the proof.
\end{proof}

The partitioning used in Theorem~\ref{thm:lattice interval at rank 1}, separating boolean and non-boolean lattices, and the decomposition given in Equation~\eqref{eqn:lattice interval as product} can be used to enumerate lattice intervals of the form $[\s_k,w]$ in $\symm_n$. When $n \le 3$, there are no non-boolean lattice intervals, and so that enumeration is covered by Theorem~\ref{thm:boolean interval at rank 1 count}.

\begin{corollary}\label{cor:lattice interval at rank 1 count}
Fix $n > 3$. Consider $\symm_n$ as a poset under the Bruhat order, and fix $k \in [1,n-1]$. The number of lattice intervals of the form $[\s_k,w]$ is
$$\begin{cases}
4F_{2n-4} & \text{ if } k \in \{1,n-1\}, \text{ and}\\
16F_{2k-2}F_{2(n-k)-2} + F_{2n-5} - F_{2k-3}F_{2(n-k)-3} & \text { if } k \in [2,n-2],
\end{cases}$$
where $F_i$ is the $i$th Fibonacci number.
\end{corollary}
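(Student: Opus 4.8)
The plan is to separate the lattice intervals $[\s_k,w]$ into those that are boolean and those that are non-boolean lattices, following the dichotomy of Theorem~\ref{thm:lattice interval at rank 1}. The boolean ones (equivalently, the modular or distributive ones) are already enumerated by Theorem~\ref{thm:boolean interval at rank 1 count}, contributing $t(n,k)$, which is $4F_{2n-4}$ when $k\in\{1,n-1\}$ and $16F_{2k-2}F_{2(n-k)-2}$ when $k\in[2,n-2]$. So the remaining work is to count the $w$ for which $[\s_k,w]$ is a non-boolean lattice. When $k\in\{1,n-1\}$ there are none, since the second alternative of Theorem~\ref{thm:lattice interval at rank 1} requires a reduced word with $\s_k\s_{k-1}\s_{k+1}\s_k$ as a factor, and one of $\s_{k-1},\s_{k+1}$ is not a generator of $\symm_n$; this already gives the formula $4F_{2n-4}$ in those two cases.

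For $k\in[2,n-2]$ I would set up a bijection between the $w$ with $[\s_k,w]$ a non-boolean lattice and the products of distinct generators of the parabolic subgroup generated by the $n-3$ consecutive generators $\{\s_2,\ldots,\s_{n-2}\}$ (which is isomorphic to $\symm_{n-2}$) whose support contains $\s_k$. In one direction, Theorem~\ref{thm:lattice interval at rank 1} supplies a reduced word $x(\s_k\s_{k-1}\s_{k+1}\s_k)y\in R(w)$ with $x,y$ products of distinct generators on disjoint supports avoiding $\{\s_k,\s_{k\pm1}\}$; applying the relabeling~\eqref{eqn:shifting letters} to $x$ and $y$ gives $x',y'$, and $v:=x'\s_ky'$ is a product of distinct generators on $\{\s_2,\ldots,\s_{n-2}\}$ using $\s_k$. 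In the other direction, any product of distinct generators $v$ on $\{\s_2,\ldots,\s_{n-2}\}$ with $\s_k$ in its support has exactly one $\s_k$ in each of its reduced words, splitting it as $x'\s_ky'$ with $x',y'$ products of distinct generators on disjoint supports avoiding $\s_k$; reversing~\eqref{eqn:shifting letters} produces a $w$ with a reduced word as in Theorem~\ref{thm:lattice interval at rank 1}, so $[\s_k,w]$ is a lattice, and it is non-boolean because it contains the non-modular interval $[\s_k,\s_k\s_{k-1}\s_{k+1}\s_k]$ of Example~\ref{ex:non-modular lattice} (whence it is not modular, hence not boolean by Theorem~\ref{thm:bruhat intervals modular=boolean}). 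Granting the bijection, the count of non-boolean lattice intervals over $\s_k$ is the number of products of distinct generators on $\{\s_2,\ldots,\s_{n-2}\}$ minus the number of those avoiding $\s_k$. By Table~\ref{table:poi lattice enumerations in Sn}, $\symm_m$ has $F_{2m-1}$ products of distinct generators, so the first quantity is $F_{2(n-2)-1}=F_{2n-5}$; and the products avoiding $\s_k$ are independent choices on the two elementwise-commuting blocks $\{\s_2,\ldots,\s_{k-1}\}\cong\symm_{k-1}$ and $\{\s_{k+1},\ldots,\s_{n-2}\}\cong\symm_{n-1-k}$, so the second quantity is $F_{2(k-1)-1}F_{2(n-1-k)-1}=F_{2k-3}F_{2(n-k)-3}$. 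Adding $16F_{2k-2}F_{2(n-k)-2}$ yields the asserted formula.

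The main obstacle is verifying that $w\leftrightarrow v$ is well defined and bijective. Well-definedness is the delicate half: a given $w$ may admit several reduced words of the form $x(\s_k\s_{k-1}\s_{k+1}\s_k)y$, because a generator at distance at least two from each of $\s_{k-1},\s_k,\s_{k+1}$ commutes with the whole factor $\s_k\s_{k-1}\s_{k+1}\s_k$ and can be moved from $x$ to $y$. The relabeling~\eqref{eqn:shifting letters} is engineered so that such a generator becomes one at distance at least two from $\s_k$, i.e., one that commutes with $\s_k$ and may likewise be moved from $x'$ to $y'$ without changing $v$ as a group element; one must check that these are the only moves available (the generators $\s_{k-2}$ and $\s_{k+2}$, which braid with $\s_{k-1}$ and $\s_{k+1}$, stay pinned to one side), so that $v$ depends on $w$ alone. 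Injectivity and surjectivity then follow from the product decomposition~\eqref{eqn:lattice interval as product}, which recovers $[\s_k,v]$, and hence $v$, from $[\s_k,w]$, together with the explicit reverse construction. Once the correspondence is in place, the enumeration is the routine Fibonacci bookkeeping displayed above.
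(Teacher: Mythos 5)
Your proposal is correct and follows essentially the same route as the paper's proof: split the lattice intervals into boolean ones (already counted by Theorem~\ref{thm:boolean interval at rank 1 count}) and non-boolean ones, use the decomposition~\eqref{eqn:lattice interval as product} to identify the non-boolean lattice intervals over $\s_k$ with products of distinct generators from $\{\s_2,\ldots,\s_{n-2}\}$ whose support contains $\s_k$, and count those by complementation as $F_{2n-5}-F_{2k-3}F_{2(n-k)-3}$. Your additional discussion of why the correspondence $w\leftrightarrow x'\s_k y'$ is well defined is a point the paper leaves implicit, but it does not change the argument.
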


\begin{proof}
It remains to enumerate the non-boolean lattices of the form $[\s_k,w]$, which can only occur when $k \in [2,n-2]$. As described in Equation~\eqref{eqn:lattice interval as product}, we must count products of distinct generators taken from $\{\s_2,\s_3,\ldots,\s_{n-2}\}$ in which $\s_k$ appears. We do this by counting the complement. The total number of products of distinct generators taken from $\{\s_2,\s_3,\ldots,\s_{n-2}\}$ is $F_{2(n-2)-1}$. Those that do not use $\s_k$ are a combination of a product of distinct generators taken from $\{\s_2,\ldots,\s_{k-1}\}$, and a product of distinct generators taken from $\{\s_{k+1},\ldots,\s_{n-2}\}$. Note that a product of the first kind commutes with a product of the second kind. There are $F_{2(k-1)-1}$ of the former products and there are $F_{2(n-1-k)-1}$ of the latter products. Thus there are
$$F_{2n-5} - F_{2k-3}F_{2(n-k)-3}$$
non-boolean lattices of the form $[\s_k,w]$. Combining this with Theorem~\ref{thm:boolean interval at rank 1 count} completes the proof.
\end{proof}

\subsection{Intervals above atoms in the weak order}

We close this section by considering the implications of Corollary~\ref{cor:weak interval characterization} on $\symm_n$. That theorem gives a fairly complete description of the desired interval types with $\s_k$, a generator of $\symm_n$. Thus we devote this section to an enumeration of those intervals. It will be interesting to observe that only the enumeration of boolean intervals depends on the value of the index $k$.

\begin{proposition}\label{prop:counting lattices above atoms in the weak order}
For fixed $k$, the number of lattices $[\s_k,w]\weakint$ in the weak order on $\symm_n$ is $n!/2$.
\end{proposition}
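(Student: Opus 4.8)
The plan is to count the permutations $w \weak$-above $\s_k$ in $\symm_n$, i.e., the $w$ for which some reduced decomposition of $w$ begins with $\s_k$. By Corollary~\ref{cor:weak interval characterization}, every such interval $[\s_k,w]\weakint$ is a lattice, so the quantity we want is simply $\#\{w \in \symm_n : \s_k \weak w\}$. Since $\s_k \weak w$ iff $w$ has a reduced word with first letter $\s_k$, and by the standard characterization of the (right) weak order on $\symm_n$ this happens precisely when $k$ is a left descent of $w$, i.e., $w^{-1}(k) > w^{-1}(k+1)$ (equivalently, in one-line notation, $k+1$ appears before $k$ in $w$). So the count is $\#\{w \in \symm_n : k \text{ appears after } k+1 \text{ in } w\}$.

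The key step is then a trivial symmetry argument: the map $w \mapsto \s_k w$ (multiply on the left by $\s_k$, which swaps the positions of the values $k$ and $k+1$ in $w$) is an involution on $\symm_n$ with no fixed points, and it interchanges the set of $w$ with $k$ before $k+1$ with the set of $w$ with $k+1$ before $k$. Hence exactly half of all $n!$ permutations have $k$ as a left descent, giving $n!/2$, independent of $k$. I would phrase this via the bijection $w \leftrightarrow \s_k w$ pairing each $w$ with $\s_k \weak w$ to a unique $w'$ with $\s_k \not\weak w'$ (one of $\ell(w), \ell(\s_k w)$ exceeds the other by exactly one, and $\s_k \weak w$ holds for exactly one of the two).

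I do not expect any real obstacle here; the only care needed is to cleanly connect ``$\s_k \weak w$'' to ``$k$ is a left descent of $w$,'' which is immediate from the exchange condition / the fact that $\s_k \weak w \iff \ell(\s_k w) < \ell(w) \iff \s_k \in D_L(w)$, together with the elementary fact that left-multiplication by $\s_k$ toggles membership in $D_L$. Since Corollary~\ref{cor:weak interval characterization} already supplies that the interval is automatically a lattice, no structural poset work is required, and the fact that the answer is independent of $k$ falls out of the fixed-point-free involution. This is the anticipated shape of the argument; the whole proof is a couple of lines.
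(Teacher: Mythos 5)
Your proposal is correct and follows essentially the same route as the paper: reduce via Corollary~\ref{cor:weak interval characterization} to counting the $w$ with $\s_k \weak w$, then pair each such $w$ with $\s_k w$ via the fixed-point-free involution $w \leftrightarrow \s_k w$ to conclude that exactly half of $\symm_n$ lies weakly above $\s_k$. The extra detail you supply about left descents is a fine (and correct) elaboration of the same bijection.
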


\begin{proof}
By Corollary~\ref{cor:weak interval characterization}, it suffices to count the elements $w$ for which $\s_k \weak w$. There is a simple bijection between such $w$ and the elements that are not above $\s_k$ in this poset: $w \longleftrightarrow \s_kw$. Thus exactly half of the elements of $\symm_n$ are greater than or equal to $\s_k$.
\end{proof}

This is sequence A001710 of \cite{oeis}.

\begin{corollary}
The total number of lattices of the form $[\s_k,w]\weakint$ in the weak order on $\symm_n$ is $(n-1)n!/2$.
\end{corollary}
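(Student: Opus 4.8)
The plan is to combine Proposition~\ref{prop:counting lattices above atoms in the weak order} with a simple summation over the choices of atom. By Corollary~\ref{cor:weak interval characterization}, every interval $[\s_k,w]\weakint$ in the weak order on $\symm_n$ is automatically a lattice, so counting lattice intervals of the form $[\s_k,w]\weakint$ amounts to counting pairs $(\s_k, w)$ with $\s_k \weak w$. For each fixed generator $\s_k$, Proposition~\ref{prop:counting lattices above atoms in the weak order} tells us there are exactly $n!/2$ such $w$.

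Next I would simply observe that the symmetric group $\symm_n$ has exactly $n-1$ atoms, namely $\s_1, \s_2, \ldots, \s_{n-1}$, since these are precisely the length-one elements and they are all distinct. Summing the count $n!/2$ over these $n-1$ choices of $k$ gives the total
$$\sum_{k=1}^{n-1} \frac{n!}{2} = (n-1) \cdot \frac{n!}{2} = \frac{(n-1)n!}{2}.$$

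The one point worth a sentence of care is that this sum genuinely counts each interval once: an interval of the form $[\s_k,w]\weakint$ records its minimum $\s_k$ as part of its data (we are counting intervals, i.e. pairs $(\s_k,w)$, not isomorphism classes of posets), so distinct values of $k$ contribute disjoint families, and within a fixed $k$ the map $w \mapsto [\s_k,w]\weakint$ is injective because $w$ is recovered as the maximum of the interval. Hence there is no overcounting and no undercounting, and the total is as claimed. There is no real obstacle here; the corollary is a direct bookkeeping consequence of the proposition together with the fact that $|\{\s_1,\ldots,\s_{n-1}\}| = n-1$.
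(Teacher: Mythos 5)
Your argument is correct and is exactly the intended one: the paper states this corollary without proof, treating it as the immediate consequence of Proposition~\ref{prop:counting lattices above atoms in the weak order} summed over the $n-1$ atoms $\s_1,\ldots,\s_{n-1}$. Your extra sentence checking that the count is over pairs $(\s_k,w)$ with no over- or undercounting is a reasonable precaution but adds nothing beyond the paper's implicit reasoning.
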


We skip ahead to boolean lattices, for a moment, which have a similarly straightforward enumeration.

\begin{proposition}\label{prop:counting boolean lattices above atoms in the weak order}
For fixed $k$, the number of boolean intervals $[\s_k,w]\weakint$ in the weak order on $\symm_n$ is $F_{k+1} F_{n-k+1}$, where $F_i$ is the $i$th Fibonacci number.
\end{proposition}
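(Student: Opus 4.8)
By Corollary~\ref{cor:weak interval characterization}, the interval $[\s_k,w]\weakint$ is boolean if and only if $\s_k^{-1}w = \s_k w$ is a product of commuting generators, i.e.\ (by the pattern characterization in Table~\ref{table:poi lattices in Sn} and Proposition~\ref{prop:weak boolean}) the support of $\s_kw$ consists of pairwise-commuting generators and every reduced word uses each such generator exactly once. Equivalently, $\s_k w$ is a ``free'' permutation in the sense of \cite{petersen tenner}: its support is an \emph{antichain} in the path graph $\s_1 - \s_2 - \cdots - \s_{n-1}$, and $\s_k w$ is the product of that antichain. So the plan is to biject the set $\{w : [\s_k,w]\weakint \text{ boolean}\}$ with the set of antichains $A \subseteq \{\s_1,\ldots,\s_{n-1}\}$ for which $\s_k \weak \prod_{s \in A} s$ --- the last condition is what forces the $k$-dependence.

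First I would translate the condition $\s_k \weak \s_k w$ (needed so that $[\s_k,w]\weakint$ makes sense) together with ``$\s_k w$ a product of commuting generators'' into a purely set-theoretic statement about $A = \mathrm{supp}(\s_k w)$. Since $\s_k w$ is a product of commuting generators, $w = \s_k \cdot \prod_{s\in A} s$, and this is automatically reduced and gives a valid interval precisely when $\s_k$ can be adjoined without a length drop, i.e.\ when $\s_k \notin A$ but $\s_k$ commutes with everything in $A$ --- which just says $A \cup \{\s_k\}$ is again an antichain not containing $\s_k$, equivalently $A$ is an antichain in the path on $\{\s_1,\ldots,\s_{n-1}\}$ avoiding the three vertices $\s_{k-1},\s_k,\s_{k+1}$. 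Wait --- one must be careful: $A$ may contain $\s_{k-1}$ or $\s_{k+1}$; in that case $w = \s_k \prod_{s\in A}s$ is still reduced (length $1+|A|$) but $w$ itself is no longer a product of commuting generators, yet the interval $[\s_k,w]\weakint \cong \weakpoi{\s_kw} = \weakpoi{\prod_{s\in A}s}$ is still boolean. So the correct condition is simply: $A$ is an antichain in the path graph, with no constraint near $k$ beyond $\s_k \notin A$. Rechecking the target count $F_{k+1}F_{n-k+1}$ against small cases will pin down the precise statement --- I expect it to be exactly ``$A$ an antichain (independent set) in the path on $\{\s_1,\dots,\s_{n-1}\}$ avoiding $\s_k$.'' Deleting the vertex $\s_k$ splits the path into two disjoint paths, on $\{\s_1,\dots,\s_{k-1}\}$ ($k-1$ vertices) and on $\{\s_{k+1},\dots,\s_{n-1}\}$ ($n-1-k$ vertices), and independent sets choose freely and independently on each piece.

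Then I would invoke the classical fact that the number of independent sets of a path on $m$ vertices is the Fibonacci number $F_{m+2}$ (indexed so $F_0=0, F_1=1$). Applying this to the two sub-paths gives $F_{(k-1)+2}\cdot F_{(n-1-k)+2} = F_{k+1}F_{n-k+1}$, which is the claimed formula. I would include a one-line induction for the path–independent-set count for self-containedness (delete the last vertex: either it is not used, giving $F_{m+1}$ configurations on a path of $m-1$ vertices, or it is used, forcing its neighbor out, giving $F_m$ configurations), and note the base cases $m=0$ (one independent set, $\varnothing$, $=F_2=1$) and $m=1$ ($=F_3=2$).

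The main obstacle is purely bookkeeping: making sure the bijection $w \leftrightarrow A = \mathrm{supp}(\s_kw)$ is well-defined and onto exactly the right set of antichains --- in particular handling the boundary cases $A \ni \s_{k\pm1}$ correctly, and confirming via the $n\le 4$ data (cf.\ the forthcoming Figure~\ref{fig:boolean over support weak}) that the constraint on $A$ is just $\s_k\notin A$ and nothing more. Once that is settled, the enumeration is immediate from the path–independent-set identity, and no genuine difficulty remains.
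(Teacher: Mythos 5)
Your proposal is correct and follows essentially the same route as the paper: both reduce the count to independent sets of the path on $\{\s_1,\ldots,\s_{n-1}\}$ avoiding the vertex $\s_k$, split across the two sub-paths on $\{\s_1,\ldots,\s_{k-1}\}$ and $\{\s_{k+1},\ldots,\s_{n-1}\}$, giving $F_{k+1}F_{n-k+1}$. Your mid-proof self-correction --- that $A$ may contain $\s_{k\pm1}$ because the commuting-generators condition applies to $\s_k w$ rather than to $w$ --- lands on exactly the condition the paper uses.
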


\begin{proof}
By Corollary~\ref{cor:weak interval characterization}, we must count $w$ for which $\s_k \weak w$ and $\s_kw$ is free. Thus we can write
\begin{equation}\label{eqn:weak boolean interval over atom decomposition}
w = \s_k \s_{i_1}\s_{i_2}\cdots \s_{i_{\ell(w)-1}},
\end{equation}
and the generators $\{\s_{i_1}, \s_{i_2}, \ldots, \s_{i_{\ell(w)-1}}\}$ must all commute with each other. Moreover, because Equation~\eqref{eqn:weak boolean interval over atom decomposition} is a reduced decomposition of $w$, we must have $k \neq i_j$, for all $j$. Thus $\{\s_{i_1}, \s_{i_2}, \ldots, \s_{i_{\ell(w)-1}}\}$ is the union of two sets: one a subset of commuting generators chosen from $\{\s_1,\ldots,\s_{k-1}\}$, and the other a subset of commuting generators chosen from $\{\s_{k+1},\ldots,\s_{n-1}\}$. As indicated in Table~\ref{table:poi lattice enumerations in Sn}, there are $F_{k+1}$ of the former and $F_{n-k+1}$ of the latter, completing the proof.
\end{proof}

This is sequence A106408 of \cite{oeis}.

\begin{corollary}
The total number of boolean intervals of the form $[\s_k,w]\weakint$ in the weak order on $\symm_n$ is
\begin{equation}\label{eqn:weak boolean interval over atom total}
\sum_{k=1}^{n-1} F_{k+1} F_{n-k+1} = \frac{(n+1)F_{n+3}+ (n-7)F_{n+1}}{5}.
\end{equation}
\end{corollary}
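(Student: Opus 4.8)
The plan is to evaluate the sum $\sum_{k=1}^{n-1} F_{k+1}F_{n-k+1}$ in closed form, using a standard convolution identity for products of Fibonacci numbers. First I would recall (or reprove in one line via Binet's formula) the identity
\[
\sum_{i=0}^{m} F_i F_{m-i} = \frac{m F_{m+1} - F_m}{5} + \text{(correction terms)},
\]
more precisely the well-known convolution formula $\sum_{i=1}^{m-1} F_i F_{m-i} = \frac{(m-1)F_m - \sum ...}{5}$; rather than chase the exact bookkeeping here, the cleanest route is Binet. Write $F_j = (\varphi^j - \psi^j)/\sqrt5$ with $\varphi\psi = -1$ and $\varphi+\psi = 1$. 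Then reindex the sum in the corollary by setting $j = k+1$, so it becomes $\sum_{j=2}^{n} F_j F_{n+2-j}$, i.e.\ a piece of the full convolution $\sum_{j=0}^{n+2} F_j F_{n+2-j}$ with the four boundary terms $j \in \{0,1,n+1,n+2\}$ removed (and $F_0 = 0$ kills two of them).

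The key steps, in order: (1) expand $F_j F_{n+2-j} = \frac{1}{5}\bigl(\varphi^{n+2} + \psi^{n+2} - \varphi^j\psi^{n+2-j} - \psi^j\varphi^{n+2-j}\bigr)$, using $\varphi^j\psi^{n+2-j} = (\varphi\psi)^j \psi^{n+2-2j} = (-1)^j \psi^{n+2-2j}$ and similarly for the other cross term. (2) Sum over $j$ from $2$ to $n$: the constant term contributes $\frac{n-1}{5}(\varphi^{n+2}+\psi^{n+2}) = \frac{n-1}{5}L_{n+2}$ where $L$ is the Lucas sequence, and the cross terms become finite geometric-type sums $\sum_j (-1)^j \psi^{n+2-2j}$ which telescope into Lucas/Fibonacci values. (3) Collect everything, convert Lucas numbers back to Fibonacci via $L_m = F_{m-1} + F_{m+1} = F_{m+2} - F_m$, and simplify. (4) Verify the resulting expression against small cases — $n = 2$ gives $F_2 F_2 = 1$ and the formula should give $\frac{3F_5 + (-5)F_3}{5} = \frac{15 - 10}{5} = 1$; $n = 3$ gives $F_2F_3 + F_3F_2 = 2$ and the formula gives $\frac{4F_6 + (-4)F_4}{5} = \frac{32 - 12}{5} = 4$, which flags that I must be careful: the sum for $n=3$ is over $k \in \{1,2\}$, giving $F_2F_3 + F_3F_2 = 1\cdot 2 + 2\cdot 1 = 4$, consistent. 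This kind of numerical check is cheap insurance against an off-by-one in the reindexing.

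The main obstacle I anticipate is purely bookkeeping: getting the boundary terms of the convolution exactly right (which $j$-values are excluded, and the parity-dependent signs $(-1)^j$ in the cross terms), together with the back-and-forth between Lucas and Fibonacci indices. There is no conceptual difficulty — Binet linearizes everything — but it is easy to land on $\frac{(n+1)F_{n+3} + (n-7)F_{n+1}}{5}$ versus a superficially different but equal expression, so I would finish by simplifying both the derived closed form and the claimed one using the recurrence $F_{n+3} = F_{n+2} + F_{n+1}$ (and $F_{n+2} = F_{n+1} + F_n$) until they are visibly identical, or alternatively verify equality at two consecutive values of $n$ and invoke that both sides satisfy the same linear recurrence in $n$ (which follows since $F_{n+1}, F_{n+3}, nF_{n+1}, nF_{n+3}$ span a space closed under the shift $n \mapsto n+1$). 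That recurrence-matching argument is probably the slickest way to close the gap without grinding the algebra.
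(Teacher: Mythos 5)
Your approach is sound, and in fact it does more than the paper does: the paper states this corollary with no proof at all, treating it as immediate from Proposition~\ref{prop:counting boolean lattices above atoms in the weak order} (sum the per-$k$ count $F_{k+1}F_{n-k+1}$ over $k \in [1,n-1]$) and outsourcing the closed-form evaluation of the convolution to OEIS sequence A004798. Your Binet-based evaluation is a legitimate way to supply that omitted verification, and your numerical spot checks are right (as are $n=4,5$: the sums are $10$ and $22$ against $\frac{5F_7-3F_5}{5}=10$ and $\frac{6F_8-2F_6}{5}=22$). Two cautions. First, do record the one-line combinatorial step --- that the total equals $\sum_{k=1}^{n-1} F_{k+1}F_{n-k+1}$ because the proposition counts the intervals over each atom $\s_k$ separately --- since that, rather than the Fibonacci identity, is the actual content of the corollary relative to the proposition. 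Second, your fallback argument (``verify equality at two consecutive values of $n$ and invoke a common linear recurrence'') underestimates the amount of checking needed: the shift-invariant space spanned by $F_{n+1}$, $F_{n+3}$, $nF_{n+1}$, $nF_{n+3}$ is four-dimensional, with characteristic polynomial $(x^2-x-1)^2$, so both sides satisfy an order-four recurrence and you must match four consecutive initial values, not two. With that correction, or by simply carrying the Binet computation through to the end, the argument closes.
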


The sequence described by Equation~\eqref{eqn:weak boolean interval over atom total} is A004798 of \cite{oeis}.

It remains, now, to count distributive/modular intervals of the form $[\s_k,w]\weakint$ in the weak order.

\begin{theorem}\label{thm:counting dist/mod lattices above atoms in the weak order}
For fixed $k$, the number of distributive (equivalently, modular) intervals $[\s_k,w]\weakint$ in the weak order on $\symm_n$ is $C_n - C_{n-1}$, where $C_i$ is the $i$th Catalan number.
\end{theorem}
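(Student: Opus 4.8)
The plan is to apply Corollary~\ref{cor:weak interval characterization}, which reduces the problem to counting permutations $w \in \symm_n$ such that $\s_k \weak w$ and $\s_k w$ is fully commutative (i.e., $321$-avoiding). As in the proof of Proposition~\ref{prop:counting lattices above atoms in the weak order}, the map $w \longleftrightarrow \s_k w$ is a bijection between $\{w : \s_k \weak w\}$ and $\{w : \s_k \not\weak w\}$, and $\s_k \weak w$ precisely when $\s_k w$ has a reduced decomposition not beginning with $\s_k$, equivalently (since $\s_k w = w'$ means $w = \s_k w'$) when $\ell(\s_k w) < \ell(w)$. So I would reparametrize: set $w' = \s_k w$, so that I am counting fully commutative $w' \in \symm_n$ with $\ell(\s_k w') > \ell(w')$, i.e. $321$-avoiding permutations $w'$ with $w'(k) < w'(k+1)$ (an ascent at position $k$, or position $k$ not being a descent). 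Thus the quantity to compute is the number of $321$-avoiding permutations of $[n]$ that do \emph{not} have a descent at position $k$.

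The main step is then the enumeration claim: the number of $321$-avoiding permutations in $\symm_n$ with no descent at a fixed position $k$ is $C_n - C_{n-1}$, independent of $k$. I would prove this by counting the complement: the number of $321$-avoiding permutations of $[n]$ \emph{with} a descent at position $k$ is $C_{n-1}$, again independent of $k$. One clean way is via the standard bijection between $321$-avoiding permutations and Dyck paths (e.g. Krattenthaler's or Billey--Jockusch--Stanley's): a descent of $w'$ at position $k$ corresponds to the Dyck path of semilength $n$ having a specified local feature at step $k$ (a peak, i.e. an up-step immediately followed by a down-step, ending at height determined by $k$ in the appropriate normalization). Counting Dyck paths of semilength $n$ with a peak at a prescribed horizontal location gives $C_{n-1}$. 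Alternatively, and perhaps more self-containedly, I would use a direct combinatorial decomposition: if $w'$ is $321$-avoiding with $w'(k) > w'(k+1)$, then $w'$ is determined by its values, and one can show $w' \mapsto$ (the permutation obtained by deleting position $k+1$ and relabeling, together with bookkeeping) is a bijection onto $321$-avoiding permutations of $[n-1]$; the key point is that in a $321$-avoiding permutation a descent top $w'(k)$ and descent bottom $w'(k+1)$ are constrained enough (everything before position $k$ that exceeds $w'(k+1)$ must in fact exceed $w'(k)$, and everything after must behave compatibly) that the deletion is reversible. Summing, $C_n - C_{n-1}$ is the count of those with an ascent at $k$.

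The hard part will be making the ``descent at position $k$'' $\to$ $C_{n-1}$ count rigorous and genuinely position-independent. The position-independence is mildly surprising and is exactly where a careless argument could go wrong, so I would lean on the cycle-lemma-style or Dyck-path phrasing where the shift-invariance is transparent: the number of Dyck paths of semilength $n$ whose $j$-th and $(j{+}1)$-st steps form a peak is $C_{n-1}$ for every valid $j$, which follows because contracting that fixed peak (removing the matched $UD$ pair) is a bijection to all Dyck paths of semilength $n-1$. Translating the descent condition on a $321$-avoiding $w'$ through the chosen bijection into this peak condition is the one place requiring care; once that dictionary is set up, the count $C_n - C_{n-1}$ is immediate, and matching this to OEIS A005043-shifted data (the stated sequence) and to the small cases ($n=2$: $C_2 - C_1 = 1$; $n=3$: $C_3 - C_2 = 3$) provides a sanity check. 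I would close by noting this completes the weak-order atom-interval enumerations and record the relevant OEIS reference.
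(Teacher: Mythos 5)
Your overall route is sound and lands on the correct count, but it is genuinely different from the paper's. You transport the problem via $w \mapsto \s_k w$ to counting fully commutative (i.e., $321$-avoiding) permutations with no descent at a fixed position, and then need the lemma that, for each fixed $k$, the $321$-avoiding permutations of $[n]$ with a descent at position $k$ number $C_{n-1}$. The paper avoids this lemma entirely: it first reduces to $k=1$ by cyclically shifting generator indices modulo $n-1$, and then runs an inclusion--exclusion inside the set $FC_n$ of fully commutative permutations, writing the desired count as $|FC_n| - |B_1|$ where $B_1$ is the set of fully commutative $v$ with $\s_1 \not\weak v$ and $\s_1 v$ still fully commutative; these turn out to be exactly the fully commutative elements supported on $\{\s_2,\dots,\s_{n-1}\}$, which are trivially $C_{n-1}$ in number. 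Your single bijection $w \leftrightarrow \s_k w$ onto $\{w' \in FC_n : \s_k \not\weak w'\}$ is arguably cleaner than the paper's $A \sqcup \s_1 B_2$ decomposition, and it keeps $k$ general throughout; the price is that the $C_{n-1}$ count becomes a nontrivial fixed-position descent statement rather than a trivial support statement.

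Two points need repair before this is a proof. First, a dictionary slip: since the paper uses the \emph{right} weak order, $\s_k \weak w$ means $\ell(\s_k w) < \ell(w)$, i.e., $k+1$ precedes $k$ in the one-line notation of $w$ (a descent of $w^{-1}$ at $k$), not $w(k) > w(k+1)$; so after substituting $w' = \s_k w$ you should be counting $321$-avoiding $w'$ in which $k$ precedes $k+1$. This is harmless for the enumeration because inversion preserves $321$-avoidance and exchanges descents with inverse descents, but it must be said. Second, and more seriously, neither of your two proposed proofs of the key lemma is complete as stated. The ``delete position $k+1$ and relabel'' map is not injective: for $k=2$ in $\symm_4$, both $1324$ and $2314$ have a descent at position $2$ and both collapse to $123$. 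And the Dyck-path dictionary is not ``peak at a prescribed absolute step position'': under the natural bijection recording the running maxima of $w$, a descent at position $k$ corresponds to the $k$-th \emph{down-step} being immediately preceded by an up-step and immediately followed by another down-step, and contracting that $UD$ pair (equivalently, inserting $UD$ just before the $k$-th down-step of a path of semilength $n-1$) is the bijection giving $C_{n-1}$. The fixed-absolute-position peak count also happens to equal $C_{n-1}$, but it counts a different set of paths (already for $n=2$, $k=1$: the unique path with a peak at steps $1,2$ is $UDUD$, which corresponds to the identity, the permutation \emph{without} a descent at $1$), so leaning on it would give a wrong dictionary that accidentally produces the right number. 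With the corrected dictionary in place your argument goes through.
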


\begin{proof}
By Corollary~\ref{cor:weak interval characterization}, we must count $w$ for which $\s_k \weak w$ and $\s_kw$ is fully commutative. By previous results (see, for example, \cite{bjs, tenner rdpp}), this means that
\begin{equation}\label{eqn:weak dist/mod interval over atom decomposition}
w = \s_k \s_{i_1}\s_{i_2}\cdots \s_{i_{\ell(w)-1}} \in R(w),
\end{equation}
and no element of $R(\s_k w)$ contains a braid move $\s_i \s_{i\pm 1} \s_i$ as a factor.

Note, first, that the specific value of $k$ does not affect the enumeration suggested in the statement of the proposition. Indeed, we can ``cycle'' the indices of the product in Equation~\eqref{eqn:weak dist/mod interval over atom decomposition}, adding a fixed value, modulo $n-1$ to each, giving a bijection between distributive/modular intervals $[\s_k,w]\weakint$ and distributive/modular intervals $[\s_{k'},w']\weakint$. The permutation $\s_k\s_{i_2}\cdots \s_{i_{\ell}}$ would be mapped to $\s_{k'} \s_{j_2} \cdots \s_{j_{\ell}}$, where $j_h = i_h + k' - k$ modulo $n-1$, and the former avoids $\s_t\s_{t\pm1}\s_t$ factors, for all $ \in [1,n-2]$, if and only if the latter does as well. A similar idea was employed in \cite{reiner}, and later in \cite{tenner expb, tenner comm}. Thus, without loss of generality, we can assume that $k = 1$.

Consider the collection $FC_n$ of all fully commutative permutations in $\symm_n$. Define
\begin{align*}
A &:= \{v \in FC_n : \s_1 \weak v\} \text{ and}\\
B &:= \{v \in FC_n : \s_1 \not\weak v\},
\end{align*}
and note that $\{A,B\}$ is a partition of $FC_n$.

Elements in $A$ have reduced decompositions of the desired form depicted in Equation~\eqref{eqn:weak dist/mod interval over atom decomposition}, although they do not describe all such permutations.

Now consider the set $B$. For each $v \in B$, there are two possibilities for the (longer) permutation $\s_1v$:
\begin{itemize}
\item $\s_1v$ is fully commutative (i.e., $\s_1v\in FC_n$), or
\item $\s_1v$ is not fully commutative (i.e., $\s_1v \not\in FC_n$).
\end{itemize}
Let $B_1$ be the set of permutations in the first category, and $B_2$ those in the second. As before, $\{B_1,B_2\}$ is a partition $B$.

The set $B_2$ describes exactly the set of permutations alluded to in the previous paragraph: the permutations $w \ge_{\text{wk}} \s_1$ which are not, themselves, fully commutative, but for which (the shorter permutation) $\s_1w$ is fully commutative; i.e., for which $[\s_1,w]\weakint$ is distributive/modular, while $\weakpoi{w}$ is not.

Thus we must compute
$$|A| + |B_2| = |A| + |B| - |B_1| = |FC_n| - |B_1| = C_n - |B_1|,$$
where $C_n$ is the $n$th Catalan number.

It remains only to calculate $|B_1|$; i.e., to enumerate the $v \in B$ for which $v$ and $\s_1v$ are fully commutative. If a reduced decomposition of $v$ were to include the generator $\s_1$, then, because $\s_1 \not\weak v$, one could do commutation moves in $\s_1v$ to obtain a factor
$$\cdots (\s_1\s_2\s_1) \cdots.$$
But then $\s_1v$ is not fully commutative, which is a contradiction. Therefore $v$ is a fully commutative product of the generators $\{\s_2,\ldots,\s_{n-1}\}$. Note that, certainly, for any such product, the permutation $\s_1v$ must also be fully commutative. There are $C_{n-1}$ such products, and so $|B_1| = C_{n-1}$, completing the proof.
\end{proof}

This is sequence A000245 of \cite{oeis}.

\begin{corollary}
The total number of distributive (equivalently, modular) intervals of the form $[\s_k,w]\weakint$ in the weak order on $\symm_n$ is $(n-1)(C_n - C_{n-1})$.
\end{corollary}

We collect the enumerations of this section in Table~\ref{table:atom lattice enumerations in Sn}, reminiscent of Table~\ref{table:poi lattice enumerations in Sn} in Section~\ref{sec:poi lattices}.

\begin{table}[htbp]
{\renewcommand{\arraystretch}{2}
\begin{tabular}{m{.85in}||m{3.35in}||m{1.4in}}
& Bruhat order on $\symm_n$ & Weak order on $\symm_n$ \\
\hline
\hline
\raisebox{0in}[.5in][.3in]{Lattice} &
\raisebox{0in}[.5in][.4in]{$\begin{cases}
4F_{2n-4} & \text{ if } k \in \{1,n-1\}\\
16F_{2k-2}F_{2(n-k)-2} + F_{2n-5} & \\
\hspace{.25in} - F_{2k-3}F_{2(n-k)-3} & \text { if } k \in [2,n-2]
\end{cases}$} & 
\raisebox{0in}[.5in][.3in]{$n!/2$}\\
\hline
\raisebox{0in}[.3in][0in]{Modular or} Distributive & 
\raisebox{0in}[.3in][.3in]{$\begin{cases}
4F_{2n-4} & \text{ if } k \in \{1,n-1\}\\
16F_{2k-2}F_{2(n-k)-2} \phantom{ +\ \, F_{2n-5}} & \text { if } k \in [2,n-2]
\end{cases}$} &
\raisebox{0in}[.3in][.3in]{$C_n-C_{n-1}$} \\
\hline
\raisebox{0in}[.4in][.3in]{Boolean} & 
\raisebox{0in}[.4in][.3in]{$\begin{cases}
4F_{2n-4} & \text{ if } k \in \{1,n-1\}\\
16F_{2k-2}F_{2(n-k)-2} \phantom{ +\ \, F_{2n-5}} & \text { if } k \in [2,n-2]
\end{cases}$} & 
\raisebox{0in}[.4in][.3in]{$F_{k+1}F_{n-k+1}$}
\end{tabular}
}
\vspace{.1in}
\caption{Number of intervals in $\symm_n$ with minimum element $\s_k$, having certain properties in the Bruhat and weak orders.}\label{table:atom lattice enumerations in Sn}
\end{table}

\section{Further directions for research}\label{sec:open questions}

In addition to dreams of elegant and general versions of the results of Section~\ref{sec:intervals over atoms in Sn}, there are many directions for further work on this topic. We present two of them here.

Given the characterizations of well-behaved intervals presented above, one might wonder how prevalent they are.

\begin{question}\label{ques:prevalence}
What proportion of intervals in the Bruhat order are boolean? What proportion among all intervals of a given size? What proportion among all intervals at a given rank?
\end{question}

One can change ``boolean'' and ``Bruhat'' for other versions of these questions, as well.

In light of the results of Section~\ref{sec:intervals over atoms in Sn}, one might ask questions like the following, with similar variations.

\begin{question}\label{ques:support}
Consider $w \in \symm_n$, and let $T \subseteq S$ be the collection of generators appearing in reduced decompositions of $w$ (i.e., the \emph{support} of $w$). When is it true that $[\s,w]$ is distributive for all $\s \in T$ in the Bruhat order?
\end{question}

In fact, the results above allow us to answer several variations of this question.

\begin{corollary}\label{cor:boolean for all support bruhat}
Consider $w \in \symm_n$ with the Bruhat order. Let $T$ be the support of $w$. Then $[\s,w]$ is boolean (equivalently, a modular lattice, or a distributive lattice) for all $\s \in T$ if and only if $w$ is a product of distinct generators (meaning that $\poi{w}$ is also boolean), or $w = \s_{k+1}\s_k\s_{k+1}$ for some $k$.
\end{corollary}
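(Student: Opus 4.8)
The plan is to apply Theorem~\ref{thm:boolean interval at rank 1} at every atom $\s \in T$ simultaneously and see what constraint that forces on $w$. The ``if'' direction is essentially immediate: if $w$ is a product of distinct generators, then every reduced decomposition of $w$ has the form $x\s y$ with $x,y$ products of distinct generators, neither containing $\s$, for any $\s$ in the support; since no generator is repeated anywhere, the conditions of Theorem~\ref{thm:boolean interval at rank 1} hold trivially at each $\s \in T$. For $w = \s_{k+1}\s_k\s_{k+1}$ one checks the three support elements by hand: $T = \{\s_k, \s_{k+1}\}$ (and $\s_k \s_{k+1}\s_k$ is the only other reduced word), and for each of $\s_k$, $\s_{k+1}$ one exhibits a reduced word of the required shape --- e.g.\ for $\s_k$ use $\s_{k+1}\s_k\s_{k+1}$ with $x = \s_{k+1}$, $y = \s_{k+1}$, so the only repeated generator $\s_{k+1}$ lies on both sides but equals $\s_{(k)+1}$, which is permitted; symmetrically for $\s_{k+1}$ using $\s_k\s_{k+1}\s_k$.

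The ``only if'' direction is the heart of the matter. Suppose $[\s,w]$ is boolean for every $\s \in T$, and suppose $w$ is not a product of distinct generators. Then by Corollary~\ref{cor:what it means to avoid 321 and 3412} every reduced decomposition of $w$ has a repeated letter, so (as in the proof of Theorem~\ref{thm:lattice iff distinct letters in strong}) there is some generator $\s_h$ that appears at least twice with a non-commuting generator in between; to be reduced we may assume a reduced word $\cdots \s_h \cdots \s_{h\pm1} \cdots \s_h \cdots \in R(w)$. Now apply Theorem~\ref{thm:boolean interval at rank 1} to the atom $\s_h \in T$: there must exist $x\s_h y \in R(w)$ with $x,y$ products of distinct generators, $\s_h$ appearing in neither, and the only common generators being $\s_{h\pm1}$. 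So in this particular reduced word $\s_h$ appears exactly once, and every repeat of every generator must be ``straddling'' that single $\s_h$. Thus $w$ has the shape $x\s_h y$ where the multiset of generators is $\{\s_h\} \cup (\text{distinct letters of }x) \cup (\text{distinct letters of }y)$, with overlap confined to $\{\s_{h-1},\s_{h+1}\}$.

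From here I would argue that the overlap must be a single generator and that $x,y$ contribute nothing else. First, $x$ and $y$ cannot both contain $\s_{h-1}$ and $\s_{h+1}$, else (by the argument in the proof of Theorem~\ref{thm:boolean interval at rank 1}, via Figure~\ref{fig:non-modular lattice}) the interval $[\s_h,w]$ fails to be boolean. So WLOG the overlap is just $\{\s_{h-1}\}$, giving a reduced word of the form $x'\s_{h-1}\s_h\s_{h-1}y'$ after commutations, where $x'$ and $y'$ use disjoint sets of generators avoiding $\s_{h-1},\s_h$ and also avoiding $\s_{h+1}$ (since $\s_{h+1}$ cannot be in both $x,y$, it lies on exactly one side, but then it is not repeated and I must rule out its presence). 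The remaining task is to show $x'$ and $y'$ are empty: if, say, $x'$ contains a generator $\s_j$, then $\s_j \in T$, and I apply Theorem~\ref{thm:boolean interval at rank 1} at the atom $\s_j$. Here is where the work lies --- I would track which generators are forced to be repeated in $R(w)$ (the braid $\s_{h-1}\s_h\s_{h-1}$ guarantees $\s_{h-1}$ is repeated in \emph{every} reduced word, and likewise analyze $\s_h$), and derive that no reduced word can present $\s_j$ in the single-copy straddled form required by Theorem~\ref{thm:boolean interval at rank 1} at $\s_{h-1}$ or $\s_h$, a contradiction. The bookkeeping over which neighbors of $\s_j$ appear, and the possible presence of $\s_{h+1}$, is the main obstacle; I expect it reduces to a short case analysis showing that any extra generator creates a forbidden pattern (a $321$ or $3412$ interacting with the braid) that breaks booleanity at one of the three atoms $\s_{h-1}, \s_h, \s_{h+1}$, forcing $w = \s_h\s_{h-1}\s_h$ exactly, i.e.\ $w = \s_{k+1}\s_k\s_{k+1}$ with $k = h-1$.
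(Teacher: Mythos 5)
Your overall strategy --- apply Theorem~\ref{thm:boolean interval at rank 1} at each atom of the support and show that any repetition forces $w$ to be a single braid --- is the same as the paper's, and your ``if'' direction is fine. But there are two problems in the ``only if'' direction. First, the claim that $x$ and $y$ cannot share both $\s_{h-1}$ and $\s_{h+1}$ ``else $[\s_h,w]$ fails to be boolean'' is false. The argument in the proof of Theorem~\ref{thm:boolean interval at rank 1} that produces the poset of Figure~\ref{fig:non-modular lattice} needs \emph{two} copies of $\s_h$ with $\s_{h\pm1}$ between them; in your setting $\s_h$ occurs exactly once, and indeed the theorem's second condition explicitly \emph{permits} both $\s_{h\pm1}$ as common generators. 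A concrete counterexample: $w = \s_1\s_3\s_2\s_1\s_3 = 4231 \in \symm_4$ has $x = y = \s_1\s_3$, and $[\s_2, 4231]$ \emph{is} boolean. That configuration must instead be excluded by invoking booleanity at a \emph{different} atom of the support (here $[\s_1,4231]$ and $[\s_3,4231]$ are not boolean), which is exactly the global leverage the corollary's hypothesis provides and which your argument does not yet use at this step.

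Second, and more seriously, the decisive step --- showing that once the braid $\s_{h-1}\s_h\s_{h-1}$ is present no further generator can appear, i.e.\ that $x'$ and $y'$ are empty --- is explicitly deferred (``here is where the work lies \dots I expect it reduces to a short case analysis''). That case analysis is the substance of the ``only if'' direction, and the cases you flag as obstacles ($\s_{h+1}$, $\s_{h-2}$, and far-away letters) behave differently and do need separate treatment. The paper closes this by casing on where an extra letter $\s_j$ sits relative to the repetition pattern $\cdots\s_{k+1}\cdots\s_k\cdots\s_{k+1}\cdots$: if $\s_j$ lies outside the two repeated occurrences, then no reduced word of $w$ can present $\s_j$ in the form required by Theorem~\ref{thm:boolean interval at rank 1}, so $[\s_j,w]$ is not boolean; if it lies between them and does not commute with $\s_{k+1}$, then $\s_{k+1}\s_k\s_{k+2}\s_{k+1}\le w$ and $[\s_{k+1},w]$ contains the non-boolean poset of Figure~\ref{fig:non-modular lattice} (letters between the repeated pair that do commute with it reduce to the first case). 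Until you carry out an argument of this kind --- and repair the point above --- the proof is incomplete.
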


\begin{proof}
Let $w$ is a product of distinct generators, then $\poi{w}$ is boolean and any interval of it is also boolean. If $w = \s_{k+1}\s_k\s_{k+1}$, it is easy to check that $[\s_i,w]$ is boolean for $i \in \{k,k+1\}$.

Now suppose that $[\s,w]$ is boolean for all $\s \in T$, and recall Theorem~\ref{thm:boolean interval at rank 1} and the limitations it places on repetition in reduced decompositions of $w$. Suppose that there is repetition, say $\cdots \s_{k+1} \cdots \s_k \cdots \s_{k+1} \cdots \in R(w)$. If, in fact, $w = \s_{k+1}\s_k\s_{k+1}$, then we are done.

Suppose, instead, that there is at least one other letter in the reduced decomposition. If $w$ has the form
$$\cdots \s_h \cdots \s_{k+1} \cdots \s_k \cdots \s_{k+1} \cdots,$$
then $[\s_h,w]$ is not boolean, by Theorem~\ref{thm:boolean interval at rank 1}. If, instead, $w$ has the form
$$\cdots \s_{k+1} \cdots \s_h \cdots \s_k \cdots \s_{k+1} \cdots$$
where $\s_h$ does not commute with $\s_{k+1}$ (i.e., $h = k+2$), then $[\s_{k+1},w]$ is not boolean, as demonstrated in Figure~\ref{fig:non-modular lattice}.
\end{proof}

For $n \ge 2$, there are $F_{2n-1} + n-2$ such elements in $\symm_n$: the $F_{2n-1}$ boolean elements (note that the identity trivially satisfies the requirements of Corollary~\ref{cor:boolean for all support bruhat}), together with the $n-2$ permutations with reduced decompositions of the form $\s_{k+1}\s_k \s_{k+1}$ for some $k$.  This is sequence A331347 of \cite{oeis}.

\begin{example}\
\begin{enumerate}
\item[(a)] There are $15$ permutations $w \in \symm_4$ for which $[\s,w]$ is boolean for all $\s$ in the support of $w$. These are marked in Figure~\ref{fig:boolean over support bruhat}. The fact that these are all elements below a given rank is due to the fact that $\symm_4$ has only three generators. For $\symm_n$ with $n > 4$, there will be ranks in which a proper subset of the elements satisfy the requirements of Corollary~\ref{cor:boolean for all support bruhat}. 
\item[(b)] The permutations $3412 = \s_2\s_1\s_3\s_2$ and $4132 = \s_2\s_3\s_2\s_1$ do not satisfy the requirements of Corollary~\ref{cor:boolean for all support bruhat}, and indeed $[\s_2,3412]$ and $[\s_1,4132]$ are not boolean, as shown in Figure~\ref{fig:boolean over support bruhat}.
\end{enumerate}
\end{example}

\begin{figure}[htbp]
\begin{tikzpicture}[scale=1]
\foreach \y in {0,9} {\fill[black] (0,\y) circle (2pt);}
\foreach \x in {-2,0,2} {\foreach \y in {1.5,7.5} {\fill[black] (\x,\y) circle (2pt);};}
\foreach \x in {-4,-2,0,2,4} {\foreach \y in {3,6} {\fill[black] (\x,\y) circle (2pt);};}
\foreach \x in {-5,-3,-1,1,3,5} {\fill[black] (\x,4.5) circle (2pt);}
\draw (0,0) coordinate (1234); \draw (0,9) coordinate (4321);
\draw (-2,1.5) coordinate (1243); \draw (0,1.5) coordinate (1324); \draw (2,1.5) coordinate (2134);
\draw (-2,7.5) coordinate (4312); \draw (0,7.5) coordinate (4231); \draw (2,7.5) coordinate (3421);
\draw (-4,3) coordinate (1423); \draw (-2,3) coordinate (1342); \draw (0,3) coordinate (2143); \draw (2,3) coordinate (3124); \draw (4,3) coordinate (2314);
\draw (-4,6) coordinate (4132); \draw (-2,6) coordinate (4213); \draw (0,6) coordinate (3412); \draw (2,6) coordinate (2431); \draw (4,6) coordinate (3241);
\draw (-5,4.5) coordinate (1432); \draw (-3,4.5) coordinate (4123); \draw (-1,4.5) coordinate (2413); \draw (1,4.5) coordinate (3142); \draw (3,4.5) coordinate (3214); \draw (5,4.5) coordinate (2341);
\draw (1234) -- (1243) -- (1423) -- (1432) -- (4132) -- (4312) -- (4321) -- (3421) -- (3241) -- (2341) -- (2314) -- (2134) -- (1234) -- (1324) -- (1423) -- (4123) -- (4132) -- (4231) -- (4321);
\draw (1243) -- (1342) -- (1432) -- (3412) -- (4312);
\draw (1243) -- (2143) -- (4123) -- (4213) -- (4312);
\draw (1324) -- (1342) -- (3142) -- (4132);
\draw (1324) -- (3124) -- (4123);
\draw (1324) -- (2314) -- (2413) -- (4213) -- (4231);
\draw (2134) -- (2143) -- (2413) -- (3412) -- (3421);
\draw (2134) -- (3124) -- (3142) -- (3412);
\draw (1423) -- (2413) -- (2431) -- (4231);
\draw (1342) -- (2341) -- (2431) -- (3421);
\draw (2143) -- (3142) -- (3241) -- (4231);
\draw (2143) -- (2341);
\draw (3124) -- (3214) -- (4213);
\draw (2314) -- (3214) -- (3412);
\draw (1432) -- (2431);
\draw (3214) -- (3241);
\draw (1234) node[below] {$\emptyset$};
\draw (4321) node[above] {$\s_1\s_2\s_3\s_1\s_2\s_1$};
\draw (2134) node[below right] {$\s_1$}; \draw (1324) node[above] {$\s_2$}; \draw (1243) node[below left] {$\s_3$};
\draw (1342) node[below left] {$\s_2\s_3$}; \draw (1423) node[below left] {$\s_3\s_2$}; \draw (2143) node[below, yshift=-4pt] {$\s_1\s_3$}; \draw (2314) node[below right] {$\s_1\s_2$}; \draw (3124) node[below right] {$\s_2\s_1$};
\draw (1432) node[left] {$\s_2\s_3\s_2$}; \draw (3214) node[right] {$\s_1\s_2\s_1$}; \draw (4123) node[left] {$\s_3\s_2\s_1$}; \draw (2341) node[right] {$\s_1\s_2\s_3$}; \draw (2413) node[left, xshift=-2pt, yshift=2pt] {$\s_1\s_3\s_2$}; \draw (3142) node[right, xshift=2pt, yshift=-2pt] {$\s_2\s_1\s_3$};
\draw (4132) node[left] {$\s_2\s_3\s_2\s_1$}; \draw (4213) node[left, xshift=2pt] {$\s_1\s_3\s_2\s_1$}; \draw (3241) node[right] {$\s_1\s_2\s_1\s_3$}; \draw (2431) node[right] {$\s_1\s_2\s_3\s_2$}; \draw (3412) node[above, yshift=2pt] {$\s_2\s_1\s_3\s_2$};
\draw (4231) node[below, yshift=-5pt] {$\s_1\s_2\s_3\s_2\s_1$}; \draw (4312) node[left] {$\s_2\s_3\s_2\s_1\s_2$}; \draw (3421) node[right] {$\s_1\s_2\s_1\s_3\s_2$};
\foreach \x in {1234,2134,1324,1243,1423,1342,2143,3124,2314,1432,4123,2413,3142,3214,2341} {\draw[ultra thick,red] (\x) circle (2.85pt);}
\end{tikzpicture}
\caption{The Bruhat order of $\symm_4$, where each element is labeled by one of its (possibly many) reduced decompositions. The permutations that form boolean intervals over all generators in their support are marked in red.}\label{fig:boolean over support bruhat}
\end{figure}
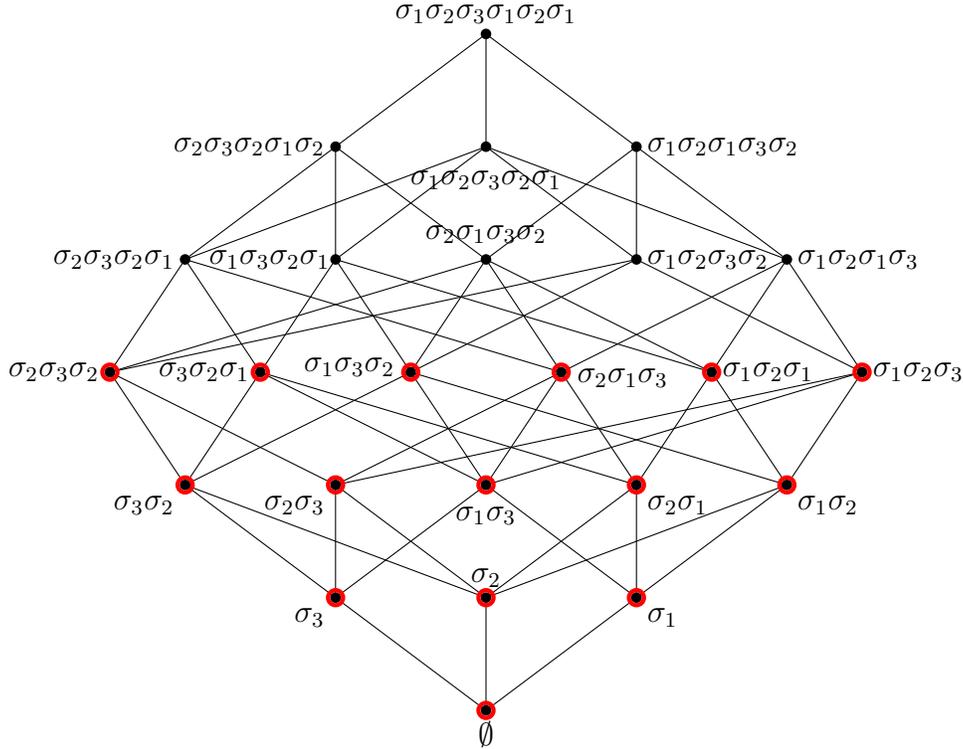

\begin{corollary}\label{cor:boolean for all support weak}
Consider $w \in \symm_n$ with the weak order. Let $T$ be the support of $w$. Then $[\s,w]\weakint$ is boolean for all $\s \in T$ if and only if $w$ is free; i.e., if and only if $\weakpoi{w}$ is boolean.
\end{corollary}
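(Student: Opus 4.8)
The plan is to use Corollary~\ref{cor:weak interval characterization} to reduce the statement entirely to a condition on the product $\s^{-1}w = \s w$ (recall $\s$ is an involution), then exploit the fact that ``$[\s,w]\weakint$ boolean'' is equivalent to ``$\s w$ is a product of commuting generators,'' for every $\s$ in the support $T$. One direction is the easy one: if $w$ is free, i.e.\ $\weakpoi{w}$ is boolean, then $w = \prod_{s \in T} s$ is a reduced product of pairwise-commuting generators, and for any $\s \in T$ the permutation $\s w = \prod_{s \in T \setminus \{\s\}} s$ is again a product of commuting (and distinct) generators, so by Corollary~\ref{cor:weak interval characterization} the interval $[\s,w]\weakint$ is boolean. (One should note that $\s \weak w$ holds because $w$ has a reduced decomposition with $\s$ as a prefix, precisely because all generators in the support commute.)

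For the converse, I would argue contrapositively: suppose $w$ is \emph{not} free. By Proposition~\ref{prop:weak boolean} (the principal-order-ideal statement), this means that in some (equivalently every) reduced decomposition of $w$ two non-commuting generators appear, or a generator is repeated; in either case, by the pattern dictionary of Table~\ref{table:poi lattices in Sn} and Corollary~\ref{cor:what it means to avoid 321}, $w$ fails to avoid one of $321$, $231$, $312$. The goal is to produce a single generator $\s \in T$ for which $\s w$ is still not a product of commuting generators, so that $[\s,w]\weakint$ is not boolean by Corollary~\ref{cor:weak interval characterization}. The natural candidate is to take $\s = s_1$, the leftmost letter of a fixed reduced decomposition $s_1 s_2 \cdots s_\ell \in R(w)$: then $\s w = s_1 w$ has reduced decomposition $s_2 \cdots s_\ell$, which is strictly shorter, and I claim it is still not free. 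Indeed, if $s_2 \cdots s_\ell$ were a product of commuting distinct generators, then $w = s_1(s_2\cdots s_\ell)$ would be a reduced product in which the only possible obstruction to freeness involves $s_1$ and its non-commuting neighbors among $\{s_2,\dots,s_\ell\}$; by choosing the reduced decomposition of $w$ and the generator $\s$ more carefully — e.g.\ pick $\s$ to be one of the two non-commuting generators witnessing non-freeness, chosen so that deleting it from an appropriate reduced decomposition still leaves the other witness together with a neighbor creating a $\s_i\s_{i\pm1}\s_i$ or a repetition — one forces $\s w$ to remain non-free. This is exactly the kind of local surgery on reduced words used in the proof of Corollary~\ref{cor:boolean for all support bruhat}, so the argument should be parallel.

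The main obstacle I anticipate is the bookkeeping in the converse: ``$w$ not free'' has two flavors (a braidable triple $\s_i\s_{i\pm1}$ appearing, versus a repeated generator) and within each flavor one must verify that \emph{some} valid choice of $\s \in T$ survives the deletion — i.e.\ that removing a single letter from a reduced decomposition of $w$ cannot simultaneously destroy all non-commuting interactions. The delicate case is when $w$ is ``just barely'' non-free, the prototype being $w = \s_{k+1}\s_k\s_{k+1}$: here $T = \{\s_k,\s_{k+1}\}$, and one checks that $\s_k w = \s_k\s_{k+1}\s_k\s_{k+1}$? — no, $\s_k(\s_{k+1}\s_k\s_{k+1}) = \s_k\s_{k+1}\s_k\s_{k+1}$ is not reduced; rather $\s_k w$ has length $2$ and equals $\s_{k+1}\s_k$ up to the subword property, which \emph{is} a product of commuting? no — $\s_{k+1}$ and $\s_k$ do not commute, so $\s_k w$ is not free, and likewise $\s_{k+1} w$. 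So even this minimal example cooperates. I would organize the converse by first handling the repeated-generator case (which forces a long enough word that deleting one copy of the repeated generator leaves the other, plus its non-commuting neighbor, intact) and then the braid-triple case, in each instance naming the surviving $\s$ explicitly and invoking Corollary~\ref{cor:weak interval characterization}. A clean way to unify both is to observe that $w$ is free if and only if $w$ avoids $321$, $231$, $312$, and then to note that deleting the leftmost letter of a reduced word, read off as a permutation operation, keeps containing one of these three patterns whenever $w$ does — a statement that can be checked directly since these are small patterns and the operation $w \mapsto s_1 w$ is transparent in one-line notation.
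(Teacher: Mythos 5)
Your forward direction is fine and agrees with the paper. The converse, however, has a genuine gap. You set yourself the goal of producing, for every non-free $w$, a generator $\s \in T$ with $\s \weak w$ and $\s w$ not free, and then invoking Corollary~\ref{cor:weak interval characterization}. That goal is not always achievable. Take $w = \s_1\s_2 = 231$: this $w$ is not free and $T = \{\s_1,\s_2\}$, but the unique reduced word of $w$ is $\s_1\s_2$, so the only atom below $w$ in the right weak order is $\s_1$, and $\s_1 w = \s_2$ \emph{is} free, so $[\s_1,w]\weakint$ \emph{is} boolean. The hypothesis fails here only because $\s_2 \in T$ while $\s_2 \not\weak w$, so that $[\s_2,w]\weakint$ is not a (nonempty) interval at all --- a failure mode your contrapositive never considers, and one that Corollary~\ref{cor:weak interval characterization} cannot detect since it presupposes $\s \weak w$. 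The same example refutes the ``clean unification'' you propose at the end: left-multiplying by the leftmost letter of a reduced word does \emph{not} preserve containment of $231$, $312$, or $321$, since $\s_1 \cdot 231 = 132$ avoids all three.

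The paper's argument sidesteps the word surgery entirely by exploiting exactly the point you skipped. If two non-commuting generators $\s_i, \s_{i+1}$ both lie in $T$, then for both intervals $[\s_i,w]\weakint$ and $[\s_{i+1},w]\weakint$ to exist one needs $\s_i \weak w$ and $\s_{i+1} \weak w$; but then their join $\s_i\s_{i+1}\s_i \weak w$, hence $\s_{i+1}\s_i \weak \s_i w$, so $\s_i w$ is not a product of commuting generators and $[\s_i,w]\weakint$ is not boolean. Either way the hypothesis fails, so $T$ contains no pair of non-commuting generators, and since every reduced word of $w$ uses only letters of $T$, it must consist of distinct commuting generators, i.e., $w$ is free. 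To repair your proof you would need to split the contrapositive into these two failure modes (some $\s \in T$ not below $w$, versus all of $T$ below $w$) and, in the second case, replace the letter-deletion argument with the join argument above.
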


\begin{proof}
If $w$ is free then certainly $w$ has the desired property.

On the other hand, suppose that $[\s,w]\weakint$ is boolean for all $\s \in T$. If $\s_i,\s_{i+1}\in T$, then $w$ cannot be greater than both $\s_i$ and $\s_{i+1}$. Therefore $w$ must be free.
\end{proof}

As described previously, there are $F_{n+1}$ elements in $\symm_n$ satisfying Corollary~\ref{cor:boolean for all support weak}.

Finally, many of the results presented here have strong potential to be generalized to arbitrary Coxeter systems. 

\begin{question}
How can notable interval structures be characterized in the Bruhat and weak orders in Coxeter groups of other types?
\end{question}

\section{Acknowledgements}

I am grateful to many people, including Darij Grinberg for his interest in pattern characterizations of well-behaved principal order ideals, and an anonymous referee for thoughtful comments and suggestions.


\begin{thebibliography}{99}

\bibitem{bergeron ceballos labbe} N.~Bergeron, C.~Ceballos, and J.-P.~Labb\'e, Fan realizations of type $A$ subword complexes and multi-associahedra of rank $3$, \textit{Discrete Comput.~Geom.} \textbf{54} (2015), 195--231.

\bibitem{bjs} S.~C.~Billey, W.~Jockusch, and R.~P.~Stanley, Some combinatorial properties of Schubert polynomials, \textit{J.~Algebraic Combin.} \textbf{2} (1993), 345--374.

\bibitem{bjorner} A.~Bj\"orner, Posets, regular CW complexes and Bruhat order, \textit{European J.~Combin.} \textbf{5} (1984), 7--16.

\bibitem{bjorner brenti} A.~Bj\"{o}rner and F.~Brenti, \textit{Combinatorics of Coxeter Groups}, Graduate Texts in Mathematics 231, Springer, New York, 2005.

\bibitem{bjorner ekedahl} A.~Bj\"{o}rner and T.~Ekedahl, On the shape of Bruhat intervals, \textit{Ann.~Math.}, \textbf{170} (2009), 799--817.

\bibitem{brenti} F.~Brenti, A combinatorial formula for Kazhdan-Lusztig polynomials, \textit{Invent.~Math.} \textbf{118} (1994), 371--394.

\bibitem{claesson kitaev ragnarsson tenner} A.~Claesson, S.~Kitaev, K.~Ragnarsson, B.~E.~Tenner, Boolean complex for Ferrers graphs, \textit{Australas.~J.~Combin.} \textbf{48} (2010), 159--173.

\bibitem{davey priestley} B.~A.~Davey and H.~A.~Priestley, \textit{Introduction to Lattices and Order}, Cambridge University Press, Cambridge, 2002.

\bibitem{dyer} M.~Dyer, On the ``Bruhat graph'' of a Coeter system, \textit{Compos.~Math.} \textbf{78} (1991), 185--191.

\bibitem{elnitsky} S.~Elnitsky, Rhombic tilings of polygons and classes of reduced words in Coxeter groups, \textit{J.~Combin.~Theory, Ser.~A} \textbf{77} (1997), 193--221.

\bibitem{fan} C.~K.~Fan, Schubert varieties and short braidedness, \textit{Transform.~Groups} \textbf{3} (1998), 51--56.

\bibitem{fishel milicevic patrias tenner} S.~Fishel, E.~Mili\'cevi\'c, R.~Patrias, and B.~E.~Tenner, Enumerations relating braid and commutation classes, \textit{European J.~Combin.} \textbf{74} (2018), 11--26.

\bibitem{gao hanni} Y.~Gao and K.~H\"anni, Boolean elements in the Bruhat order, arXiv:2007.08490.

\bibitem{grabiner} D.~J.~Grabiner, Posets in which every interval is a product of chains, and natural local actions of the symmetric group, \textit{Disc.~Math.} \textbf{199} (1999), 77--84.

\bibitem{hultman vorwerk} A.~Hultman and K.~Vorwerk, Pattern avoidance and Boolean elements in the Bruhat order on involutions, \textit{J.~Algebr.~Comb.} \textbf{30} (2009), 87--102.

\bibitem{kitaev} S.~Kitaev, \textit{Patterns in Permutations and Words}, Monographs in Theoretical Computer Science, Springer-Verlag, Berlin, 2011.

\bibitem{jantzen} J.~Jantzen, \textit{Moduln mit Einem H\"ochsten Gewicht}, Lecture Notes in Mathematics 750, Springer-Verlag, Berlin, 1979.

\bibitem{matsumoto} H.~Matsumoto, G\'en\'erateurs et relations des groupes de Weyl g\'en\'eralis\'es, \textit{C.~R.~Acad.~Sci.~Paris} \textbf{258} (1964), 3419--3422.

\bibitem{oeis} OEIS Foundation Inc. (2020), The On-Line Encyclopedia of Integer Sequences, \url{http://oeis.org}.

\bibitem{petersen tenner} T.~K.~Petersen and B.~E.~Tenner, The depth of a permutation, \textit{J.~Combin.} \textbf{6} (2015), 145--178.

\bibitem{ragnarsson tenner 1} K.~Ragnarsson and B.~E.~Tenner, Homotopy type of the boolean complex of a Coxeter system, \textit{Adv.~Math.} \textbf{222} (2009), 409--430.

\bibitem{ragnarsson tenner 2} K.~Ragnarsson and B.~E.~Tenner, Homology of the boolean complex, \textit{J.~Algebr.~Comb.} \textbf{34} (2011), 617--639.

\bibitem{ragnarsson tenner 3} K.~Ragnarsson and B.~E.~Tenner, Derangement frequency in the boolean complex, \textit{Raust} \textbf{8} (2011), 43--46.

\bibitem{reiner} V.~Reiner, Note on the expected number of Yang?Baxter moves applicable to reduced decompositions, \textit{European J.~Combin.} \textbf{26} (2005), 1019--1021.

\bibitem{stanley} R.~P.~Stanley, On the number of reduced decompositions of elements of Coxeter groups, \textit{European J.~Combin.} \textbf{5} (1984), 359--372.

\bibitem{ec1} R.~P.~Stanley, \textit{Enumerative Combinatorics, vol.~1}, Cambridge University Press, Cambridge Studies in Applied Mathematics 49, Cambridge, 2011.

\bibitem{stembridge} J.~R.~Stembridge, On the fully commutative elements of Coxeter groups, \textit{J.~Algebr.~Comb.} \textbf{5} (1996), 353--385.

\bibitem{tenner rdpp} B.~E.~Tenner, Reduced decompositions and permutation patterns, \textit{J.~Algebr.~Comb.} \textbf{24} (2006), 263--284.

\bibitem{tenner patt-bru} B.~E.~Tenner, Pattern avoidance and the Bruhat order, \textit{J.~Combin.~Theory, Ser.~A} \textbf{114} (2007), 888--905.

\bibitem{tenner expb} B.~E.~Tenner, On expected factors in reduced decompositions in type $B$, \textit{European J.~Combin.} \textbf{28} (2007), 1144-1151.

\bibitem{tenner repetition} B.~E.~Tenner, Repetition in reduced decompositions, \textit{Adv.~Appl.~Math.} \textbf{49} (2012), 1--14.

\bibitem{tenner intervals factors} B.~E.~Tenner, Intervals and factors in the Bruhat order, \textit{Disc.~Math.~Theoret.~Comp.~Sci.} \textbf{17} (2015), 383--396.

\bibitem{tenner comm} B.~E.~Tenner, On the expected number of commutations in reduced words, \textit{Australas.~J.~Combin.} \textbf{62} (2015), 147--154.

\bibitem{tenner rwm} B.~E.~Tenner, Reduced word manipulation: patterns and enumeration, \textit{J.~Algebr.~Comb.} \textbf{46} (2017), 189--217.

\bibitem{dppa} B.~E.~Tenner, \textit{Database of Permutation Pattern Avoidance}, \url{http://math.depaul.edu/bridget/patterns.html}.

\bibitem{tits} J.~Tits, Le probl\`eme des mots dans les groupes de Coxeter, in \textit{Symposia Mathematica (INDAM, Rome, 1967/68), Vol.~1}, pages 175--185., Academic Press, London, 1969.

\bibitem{west} J.~West, Generating trees and forbidden subsequences, \textit{Discrete Math.} \textbf{157} (1996), 363--374.

\bibitem{zollinger} D.~M.~Zollinger, Equivalence classes of reduced words, Master's thesis, University of Minnesota, 1994.

\end{thebibliography}
\end{document}